\documentclass[11pt]{amsart}

\usepackage[dvips,final]{graphicx}
\usepackage[dvips]{geometry}
\usepackage{color}
\usepackage{pstricks}
\usepackage{amssymb,amscd,latexsym,epsfig,xypic,hyperref}
\usepackage[mathscr]{euscript}

\DeclareFontFamily{OT1}{rsfs}{}
\DeclareFontShape{OT1}{rsfs}{n}{it}{<-> rsfs10}{}
\DeclareMathAlphabet{\curly}{OT1}{rsfs}{n}{it}

\DeclareFontFamily{OT1}{arrow}{\hyphenchar\font45 }
\DeclareFontShape{OT1}{arrow}{m}{n}{<7><8><10><12><13.82><16.59><19.907><23.89><28.66><34.4><41.28>cmr8}{}
\DeclareSymbolFont{arrowsym}{OT1}{arrow}{m}{n}
\DeclareMathSymbol{\arroweq}{\mathrel}{arrowsym}{"3D}

\renewcommand\;{\hspace{.6pt}}
\renewcommand\P[1]{{\mathbb P}^{\;#1}}
\newcommand\PP{\mathbb P}
\newcommand\LL{\mathbb L}
\newcommand\I{\curly I}
\newcommand\C{\mathbb C}

\newcommand\Z{\mathbb Z}

\newcommand\cO{\mathcal O}
\newcommand\cA{\mathcal A}

\newcommand\cC{\mathcal C}

\renewcommand\cH{\mathcal H}

\newcommand{\rt}[1]{\stackrel{#1\,}{\rightarrow}}
\newcommand{\Rt}[1]{\stackrel{#1\,}{\longrightarrow}}
\newcommand{\RT}[2]{\xymatrix@C=#1pt{\ar[r]^{#2}&}}
\newcommand\To{\longrightarrow}
\newcommand\oT{\longleftarrow}

\newcommand\into{\hookrightarrow}
\newcommand\INTO{\ \ar@{^(->}[r]<-.2ex>}
\newcommand{\Into}{\ensuremath{\lhook\joinrel\relbar\joinrel\rightarrow}}

\renewcommand\_{^{}_}

\newcommand\Langle{\big\langle}
\newcommand\Rangle{\big\rangle}
\newfont{\bigtimesfont}{cmsy10 scaled \magstep5}
\newcommand{\bigtimes}{\mathop{\lower0.9ex\hbox{\bigtimesfont\symbol2}}}

\newcommand\Gr{\operatorname{Gr}}
\newcommand\Pf{\operatorname{Pf}}
\newcommand\Sym{\operatorname{Sym}}

\newcommand\rk{\operatorname{rank}}

\newcommand\id{\operatorname{id}}

\newcommand\Hom{\operatorname{Hom}}
\renewcommand\hom{\curly H\!om}

\newcommand\Ext{\operatorname{Ext}}

\newcommand\Spec{\operatorname{Spec}\,}
\newcommand\Hilb{\operatorname{Hilb}}
\newcommand\Cone{\operatorname{Cone}}
\newcommand\Bl{\operatorname{Bl}}

\renewcommand\;{\hspace{.6pt}}

\newcommand\beq[1]{\begin{equation}\label{#1}}
\newcommand\eeq{\end{equation}}
\newcommand\beqa{\begin{eqnarray*}}
\newcommand\eeqa{\end{eqnarray*}}

\DeclareRobustCommand{\SkipTocEntry}[3]{}
\makeatletter
\newcommand\@dotsep{4.5}
\def\@tocline#1#2#3#4#5#6#7{\relax
  \ifnum #1>\c@tocdepth 
  \else
    \par \addpenalty\@secpenalty\addvspace{#2}%
    \begingroup \hyphenpenalty\@M
    \@ifempty{#4}{%
      \@tempdima\csname r@tocindent\number#1\endcsname\relax
    }{%
      \@tempdima#4\relax
    }%
    \parindent\z@ \leftskip#3\relax \advance\leftskip\@tempdima\relax
    \rightskip\@pnumwidth plus1em \parfillskip-\@pnumwidth
    #5\leavevmode #6\relax
    \leaders\hbox{$\m@th
      \mkern \@dotsep mu\hbox{.}\mkern \@dotsep mu$}\hfill
    \hbox to\@pnumwidth{\@tocpagenum{#7}}\par
    \nobreak
    \endgroup
  \fi}
\makeatother

\makeatletter \@addtoreset{equation}{section} \makeatother

\newtheorem{defn}[equation]{Definition}
\newtheorem{thm}[equation]{Theorem}
\newtheorem{lem}[equation]{Lemma}

\newtheorem{prop}[equation]{Proposition}
\theoremstyle{definition}\newtheorem{rmk}[equation]{Remark}

\title{Notes on Homological Projective Duality}
\author{Richard P. Thomas}

\begin{document}

\maketitle
\renewcommand\contentsname{\vspace{-9mm}}
\tableofcontents \vspace{-6mm}


\section{Introduction}
Kuznetsov's Homological Projective Duality \cite{HPD} is a beautiful way to relate the derived categories of coherent sheaves on different varieties.

In these notes we begin with Beilinson's and Orlov's theorems on the derived categories of projective bundles and blow ups, and show how these lead naturally and easily to what in these notes we call ``HPD I" (Kuznetsov calls it ``HPD for the stupid Lefschetz decomposition"). It is a simple manifestation of the natural geometric correspondence between a projective variety $$X\To\PP(V^*)$$ and its projective dual family of hyperplane sections
\beq{Hpdual}
\cH\To\PP(V).
\eeq

Then we describe Kuznetsov's theory of Lefschetz decompositions to show how to cut down $D(\cH)$ in HPD I to its ``interesting part" $\cC_\cH\subset D(\cH)$ in the more economical HPD II. If the reader takes some lengthy, tedious cohomology and mutation calculations on trust, she can easily and quickly understand HPD II as a simple rearrangement of Lego blocks in HPD I, with the whole abstract theory described in a few pages. Remark \ref{HPDIII} gives perhaps the best way to summarise HPD II.

I personally found that splitting HPD into these two steps, and understanding each separately, makes the theory much easier to follow.\footnote{In fact, although I was the MathSciNet reviewer of \cite{HPD}, I only felt I understood it properly much later, when I saw it from the point of view described in these notes.}  In some sense it can be thought of as simply reading the original paper \cite{HPD} backwards. But it gives nothing new, apart from sidestepping Kuznetsov's requirement that the HP dual should be \emph{geometric} --- which was surely never a particularly important prerequisite in his theory anyway. Indeed it gives less, since we restrict to the case of rectangular Lefschetz collections, and smooth baseloci, to simplify the exposition. The extension to non-rectangular Lefschetz collections is handled in \cite{JLX}. The way to handle the singular case is to repeat the working in generic (linear) families with smooth total space, as in \cite{HPD}. That way one also gets a relative version of HPD.

Of course the power and interest of HPD is that geometric examples actually exist. The category $\cC_\cH\subset D(\cH)$  that we cut down to in HPD\nolinebreak\ II is only interesting because of Kuznetsov's great insight that it is often geometric --- i.e. it is equivalent to $D(Y)$ for some variety $Y\to\PP(V)$. Then the HP dual of $X\to\PP(V^*)$ is another variety $Y\to\PP(V)$ rather than just some abstract $D(\PP(V))$-linear triangulated category $\cC_\cH$. In this case we get a true duality, with $X$ also the HP dual of $Y$. We survey some of these geometric examples in Section \ref{egs}. 

(In these notes we are deliberately vague about what we mean by a category being geometric. The best situation is when it is $D(Y)$, but often it will only be this locally; there may be a Brauer class to twist by, and singularities which admit a noncommutative or categorically crepant resolution.)

Since the derived category $D(H)$ --- and its ``interesting part" $\cC_H\subset D(H)$ --- can detect when a projective variety $H$ is smooth, we can recover the classical projective dual $\check X$ of $X$ (i.e. the discriminant locus of $\cH\to\PP(V)$ \eqref{Hpdual}) from HPD. Since $\check X$ is the locus of hyperplanes $H$ of $X$ which are singular, it is the locus of $H\in\PP(V)$ for which $\cC_H$ fails to be smooth and proper. In the geometric case this is the same as the fibre $Y_H=Y\times_{\PP(V)}\{H\}$ being singular, so $\check X$ is also the discriminant locus of $Y\to\PP(V)$.

\smallskip\noindent\textbf{Acknowledgements.} The results here are all due to Sasha Kuznetsov. I thank him for his patience and good humour in answering all my questions, and for numerous helpful comments on the manuscript. Thanks also to Nick Addington, Marcello Bernardara, David Favero, J\o rgen Rennemo and Ed Segal for HPD discussions over several years. I am grateful to Qingyuan Jiang, Conan Leung and Ying Xie for suggesting the proof of generation in Theorem \ref{hpdthm}, and two conscientious referees for corrections.

\section{Projective bundles and blow ups}

Throughout $D(X)$ denotes the bounded derived category of coherent sheaves on a smooth complex projective variety $X$. All functors (restriction, pullback, pushforward, tensor product) are derived unless otherwise stated.

\subsection{Beilinson}\label{bile} We begin with Beilinson's theorem
\begin{eqnarray} \label{Beil}
D(\P{n-1}) &=& \Langle\cO,\cO(1),\ldots,\cO(n-1)\Rangle \\ \nonumber
&=& \Langle D(\mathrm{pt}),D(\mathrm{pt}),\ldots,D(\mathrm{pt})\Rangle.
\end{eqnarray}
What we mean by this notation (which we shall use repeatedly) is that
\begin{itemize}
\item The sheaves $\cO,\cO(1),\ldots,\cO(n-1)$ \emph{generate} $D(\P{n-1})$,
\item They are \emph{semi-orthogonal}: there are no RHoms from right to left, i.e. $\Ext^*(\cO(i),\cO(j))=0$ for $i>j$,
\item The functor $D(\mathrm{pt})=D(\Spec\C)\to D(\P{n-1})$ which takes $\C$ to $\cO(i)$ is an embedding --- i.e. it is full and faithful. Equivalently $\cO(i)$ is \emph{exceptional}: $R\Hom(\cO(i),\cO(i))=\C\cdot\id$.
\end{itemize}
The second two conditions are simple cohomology computations. We sketch how to recover the first condition from them.\footnote{As Nick Addington showed me.} Since $\cO,\cO(1),\ldots,\cO(n-1)$ are exceptional and semi-orthogonal, the Gram-Schmidt process\footnote{For instance, to project $E$ to $\langle\cO\rangle^\perp$ we replace it by $\Cone\big(\cO\otimes R\Hom(\cO,E)\to E\big)$.} shows that any object of $D(\P{n-1})$ is an extension of a piece in their span and a piece in their right orthogonal. Thus we want to show their orthogonal is zero.

Any point $p\in\P{n-1}$ is cut out by a section of $\cO(1)^{\oplus(n-1)}$, so its structure sheaf $\cO_p$ admits a Koszul resolution by sheaves in $\Langle\cO,\cO(1),\ldots,\cO(n-1)\Rangle$. Thus any object $F$ in the right orthogonal is also orthogonal to $\cO_p$, and
\beq{naky}
F|_p\,\cong\,R\hom(\cO_p,F)[n-1]\,=\,0
\eeq
for any $p\in\P{n-1}$. Therefore $F=0$ by a standard argument. (Replacing $F$ by a quasi-isomorphic finite complex $F^\bullet$ of locally free sheaves, by \eqref{naky} its restriction to $p$ is an exact complex of vector spaces. In particular the final map is onto so, by the Nakayama lemma, the final map in $F^\bullet$ is also onto in a neighbourhood of $p$. Thus we can locally trim to a shorter quasi-isomorphic complex of locally free sheaves by taking its kernel. Inductively we find that $F$ is quasi-isomorphic to zero in a neighbourhood of $p$ for every $p$.)

\smallskip\noindent\textbf{Digression.} We also describe Beilinson's original proof, which is beautifully geometric and actually proves more --- giving an explicit presentation \eqref{explic} of any $F\in D(\PP^{n-1})$ in terms of the exceptional collection.

We set $\PP:=\PP^{n-1}=\PP(V)$ for some vector space $V\cong\C^n$. Then on $\PP\times\PP$ there is a canonical section of $T_{\PP}(-1)\boxtimes\cO_{\PP}(1)$ corresponding to $\id_V\in V\otimes V^*\cong H^0(T_{\PP}(-1))\otimes H^0(\cO_{\PP}(1))$. Up to scale, at the point $(p_1,p_2)\in\PP\times\PP$, it gives the tangent vector at $p_1$ of the unique line from $p_1$ to $p_2$. But it vanishes precisely when the $p_i$ coincide, i.e. on the diagonal $\Delta$. Thus we get a Koszul resolution for $\cO_\Delta$ (in fact the one used above for $\cO_p$, put into the universal family as $p$ moves over $\PP$),
$$
0\To\Omega^{n-1}_{\PP}(n-1)\boxtimes\cO_{\PP}(1-n)\To\ldots\To\Omega_{\PP}(1)\boxtimes\cO_{\PP}(-1)\To\cO\To\cO_\Delta\To0.
$$
Therefore the complex $\Omega^\bullet_{\PP}(\bullet)\boxtimes\cO_{\PP}(-\bullet)$ is (quasi-isomorphic to) the  Fourier-Mukai kernel $\cO_\Delta$ for the identity functor. Applying it to any $F\in D(\PP)$, we see $F$ is quasi-isomorphic to the complex
\beq{explic}
F_{n-1}\otimes\_\C\cO_{\PP}(1-n)\To\ldots\To
F_1\otimes\_\C\cO_{\PP}(-1)\To F_0\otimes\_\C\cO_{\PP\;},
\eeq
where $F_i:=R\Hom\big(\Lambda^iT_{\PP}, F(i)\big)$.
Thus $F\in\Langle\cO,\cO(-1),\ldots,\cO(1-n)\Rangle$.

\subsection{Orlov I} This is a family version of Beilinson's result. Suppose $X=\PP(E)\Rt\pi B$ is the $\P{n-1}$-bundle of a rank $n$ vector bundle $E$ over a smooth base $B$. It has a tautological bundle $\cO(-1)\into\pi^*E$. Orlov \cite{Or} showed that
$$
D(X)\,=\,\Langle D(B),\,D(B)(1),\,\ldots,\,D(B)(n-1)\Rangle.
$$
Here the $i$th copy of $D(B)$ is embedded via $\pi^*(\ \cdot\ )\otimes\cO(i)$.
Again easy cohomological computations show these are semi-orthogonal embeddings. For generation, see the slightly more general proof of HPD I \eqref{hypthm} below.

\smallskip\noindent\textbf{Digression.} $\P{n-1}$-bundles which are \emph{not} the projectivisation of a vector bundle can also be handled \cite{Be} by using the \emph{twisted} derived categories $D(B,\alpha^i)$, where $\alpha\in H^2_{\mathrm{\mathaccent19 et}}(\cO_B^*)$ is the Brauer class of the bundle.

\subsection{Orlov II} There is a similar result for the derived category of a blow up $\Bl_Z X$, where $Z\subset X$ are both smooth. We use the notation
$$
\xymatrix{
E \ar[d]_p\INTO^-j& \Bl_ZX \ar[d]^\pi \\
Z \INTO^i& X}
$$
for the blow up $\pi$ and its exceptional divisor $E$. Including $D(X)$ into $D(\Bl_ZX)$ by  $\pi^*$, and $D(Z)$ by $j_*p^*$, Orlov \cite{Or} proved
$$
D(\Bl_ZX)\,=\,\Langle D(X),\,D(Z),\,D(Z)(-E),\,\ldots,\,D(Z)(-(n-2)E)\Rangle,
$$
where $n$ is the codimension of $Z\subset X$. Since $p\colon E\to Z$ is a $\P{n-1}$-bundle whose $\cO(1)$ line bundle is $\cO(-E)$, the
analogy to Orlov I --- with one copy of $D(Z)$ replaced by $D(X)$ --- is clear. That $\pi^*$ and $j_*p^*$ give embeddings is an easy cohomology computation, as is semi-orthogonality.
For generation see the proof of HPD I \eqref{hypthm} below.  \medskip

We now turn to homological projective duality. We emphasise that
a large part of it can be seen as an Orlov-type theorem for the derived category of a certain fibration over $X$. This fibration is generically a projective bundle with fibres $\P{\ell-2}$, but over a codimension-$\ell$ subvariety $X_{L^\perp\!}\subset X$ these jump to $\P{\ell-1}$ (so the case $\ell=2$ is the blow up above).

\section{Homological projective duality I}
\subsection{The universal hyperplane section}\label{qu}
Fix $(X,\cO_X(1))$, a variety with a semi-ample line bundle, and a basepoint-free linear system
\beq{inc}
V\subseteq H^0(\cO_X(1)).
\eeq
We get a map
\beq{f}
f\colon X\To\PP(V^*)
\eeq
whose image is not contained in a hyperplane. Conversely, given a map \eqref{f} with image not contained in a hyperplane, we recover a basepoint-free linear subsystem \eqref{inc} by pullback $f^*$ on sections of $\cO(1)$.

The natural variety over the \emph{dual} projective space $\PP(V)$ is the universal family of hyperplanes $\cH\to\PP(V)$, where
\beq{cH}
\cH:=\big\{(x,s):s(x)=0\big\}\subset X\times\PP(V)
\eeq
is the obvious incidence hyperplane. Here and below we identify $s\in V$ with a section $s\in H^0(\cO_X(1))$ via \eqref{inc}. The isomorphism
\beq{tort}
H^0\big(\cO_{X\times\PP(V)}(1,1)\big)\,\cong\,H^0(\cO_X(1))\otimes V^*
\eeq
defines a tautological section corresponding to $f^*$ \eqref{inc}. This cuts out the divisor $\cH$ \eqref{cH}. The discriminant locus of the projection $\cH\to\PP(V)$ is the classical projective dual of $X$.

Given a linear subspace $L\subseteq V$, let $\ell$ denote its dimension and $L^\perp\subseteq V^*$ its annihilator. We set
\beqa
X_{L^\perp} \!\!&:=&\!\! X\times_{\PP(V^*)}\PP(L^\perp), \\
\cH_L \!\!&:=&\!\! \cH\times_{\PP(V)}\PP(L).
\eeqa
Notice that $X_{L^\perp}$ is nothing but the baselocus of the linear system $$L\subseteq H^0(\cO_X(1))$$ and \emph{therefore is contained in every fibre of $\cH_L\to\PP(L)$}, giving a diagram
\beq{peche}
\xymatrix{
X_{L^\perp}\!\times\PP(L) \ar[d]_p\INTO^-j& \cH_L\! \ar[d]_\pi\INTO^-\iota& X\times\PP(L) \ar[dl]^\rho \\
X_{L^\perp}\! \INTO^i& X.\!}
\eeq
Notice that $\pi$ has general fibre $\P{\ell-2}$. Over $X_{L^\perp}$ its fibre is $\PP(L)=\P{\ell-1}$.

\subsection{Orlov-type result}
When the baselocus $X_{L^\perp}$ has the expected dimension, the diagram \eqref{peche} induces an inclusion of the derived category of $X_{L^\perp\!}$ into that of the universal hypersurface $\cH_L$ over the linear system.

\begin{prop}[HPD I] \label{hypthm}
Suppose $X_{L^\perp}$ has dimension $\dim X-\ell$. Then
$$
j_*p^*\colon D\big(X_{L^\perp\!}\big)\To D(\cH_L)
$$
is a full and faithful embedding. So is $\pi^*$, and together these give a semi-orthogonal decomposition
\beq{sodd}
D(\cH_L)=\Big\langle D\big(X_{L^\perp\!}\big),\ \pi^*D(X)(0,1),\ \ldots\ ,\ \pi^* D(X)(0,\ell-1)\Big\rangle.
\eeq
\end{prop}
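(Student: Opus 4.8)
The plan is to treat $\cH_L \subset X \times \PP(L)$ as a divisor in the $\PP^{\ell-1}$-bundle $X \times \PP(L) \to X$, cut out by a section of $\cO(0,1)$ that degenerates along $X_{L^\perp}$, and to run a Koszul/blow-up-style argument closely paralleling the proofs of Orlov I and Orlov II. First I would set up the cohomology: semi-orthogonality of the collection $\pi^*D(X)(0,1),\ldots,\pi^*D(X)(0,\ell-1)$ follows from pushing forward along $\pi$ (or $\iota$ then $\rho$) and using $H^*(\PP(L),\cO(-k))=0$ for $0<k<\ell$; fullness and faithfulness of $\pi^*$ and of $j_*p^*$ are the standard projection-formula computations, using that $p\colon X_{L^\perp}\times\PP(L)\to X_{L^\perp}$ is a $\PP^{\ell-1}$-bundle with $\cO_p(1)=\cO(0,1)$ and that $j$ is a divisorial embedding with normal bundle $\cO(0,1)|_{X_{L^\perp}\times\PP(L)}$ (here is where the expected-dimension hypothesis enters, guaranteeing $\cH_L$ has a resolution $0\to\rho^*\big(\cI_{X_{L^\perp}}(0,-1)\big)\to\cO\to\iota_*\cO_{\cH_L}\to 0$ on $X\times\PP(L)$, equivalently that $\cH_L$ is the vanishing of a regular section). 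I would also verify semi-orthogonality of $D(X_{L^\perp})$ against the $\pi^*D(X)(0,i)$ using this resolution.

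For generation --- the substantive point --- I would argue that the subcategory $\cT$ generated by the right-hand side of \eqref{sodd} is all of $D(\cH_L)$ by showing its right orthogonal vanishes. Given $F$ in that orthogonal, I first use the semi-orthogonal pieces $\pi^*D(X)(0,i)$ for $i=1,\ldots,\ell-1$: orthogonality to all of these says $R\pi_*(F(0,-i))=0$ for $i=1,\ldots,\ell-1$, while $R\pi_*(F(0,0))$ and the top twist are controlled by Serre duality along the fibres; feeding this into the resolution of $\iota_*\cO_{\cH_L}$ on $X\times\PP(L)$, or equivalently decomposing $\rho_*\iota_*F$ against Beilinson's collection $\cO,\cO(0,-1),\ldots,\cO(0,1-\ell)$ on the $\PP^{\ell-1}$-bundle, pins down the pushforward of $F$ to $X$ in terms of $R\pi_*F$ and shows it is concentrated in the ``jumping'' locus $X_{L^\perp}$. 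Orthogonality to $D(X_{L^\perp})=j_*p^*D(X_{L^\perp})$ then kills that last contribution: by the projection formula $\Hom(j_*p^*G,F)=\Hom(G, p_*j^!F)$, and $j^!F=j^*F(0,-1)[-1]$ since $j$ is divisorial, so this forces $Rp_*\big(j^*F(0,-1)\big)=0$ for all $G\in D(X_{L^\perp})$, hence $j^*F=0$; combined with the previous step this gives $F=0$ by a Nakayama argument on a locally free model exactly as in \S\ref{bile}.

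I expect the main obstacle to be the generation argument over the jumping locus $X_{L^\perp}$, where the fibre of $\pi$ jumps from $\PP^{\ell-2}$ to $\PP^{\ell-1}$: there the map $\iota\colon\cH_L\into X\times\PP(L)$ is an isomorphism on fibres rather than a divisorial embedding, so the two descriptions of $\cH_L$ (divisor in the bundle versus blow-up-type correspondence) interact, and one must check that the piece of $F$ surviving the first step is exactly what $j_*p^*D(X_{L^\perp})$ sees --- i.e. that the two ``extra'' objects $\cO$ and the top twist $\cO(0,\ell-1)$ in Beilinson's resolution reassemble into the single copy $D(X_{L^\perp})$, just as in Orlov II one copy of $D(Z)$ is traded for $D(X)$. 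This bookkeeping is where the expected-dimension hypothesis does real work, and it is cleanest to phrase it as: restrict the whole picture to a generic linear subsystem with smooth total space (as in \cite{HPD}) if one wants to reduce to the honest projective-bundle and blow-up cases already proved, or else run the Koszul resolution of $\iota_*\cO_{\cH_L}$ directly and chase the spectral sequence.
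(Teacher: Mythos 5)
The overall architecture here is reasonable and close to the paper's (cohomological semi-orthogonality computations, then generation by showing the orthogonal vanishes via Beilinson plus the Nakayama argument of Section \ref{bile}), but there is a genuine error at the heart of your treatment of $j$. The inclusion $j\colon X_{L^\perp}\!\times\PP(L)\into\cH_L$ is \emph{not} a divisorial embedding: $\cH_L$ has dimension $\dim X+\ell-2$ while $X_{L^\perp}\!\times\PP(L)$ has dimension $(\dim X-\ell)+(\ell-1)=\dim X-1$, so $j$ has codimension $\ell-1$ (it is a divisor only in the blow-up case $\ell=2$). Its normal bundle is not $\cO(0,1)$ but the rank-$(\ell-1)$ bundle $N_j\cong\cO_{X_{L^\perp}}\!(1)\boxtimes\Omega_{\PP(L)}(1)$, obtained by lifting the section of $\cO_X(1)\otimes L^*$ cutting out $X_{L^\perp}$ through the Euler sequence on $\PP(L)$; this is where the expected-dimension hypothesis actually does its work (it makes the lifted section regular). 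Consequently your formula $j^!F=j^*F(0,-1)[-1]$ is false for $\ell>2$ --- the correct one is $j^!=j^*\otimes\Lambda^{\ell-1}N_j[1-\ell]=j^*(\ell-1,-1)[1-\ell]$ --- and the full-faithfulness argument for $j_*p^*$ as you set it up collapses: the cone on $j^*j_*\to\id$ is an iterated extension of $\Lambda^rN_j^*[r]$ for $1\le r\le\ell-1$, and what makes it vanish after $p_*$ is the acyclicity of $\Omega^r_{\PP^{\ell-1}}(r)$ for $1\le r\le\ell-1$, not the vanishing of $p_*\cO(0,-1)$. (Relatedly, $\cH_L$ is cut out by a regular section of $\cO(1,1)$, not $\cO(0,1)$, so its Koszul resolution in $X\times\PP(L)$ is $0\to\cO(-1,-1)\to\cO\to\iota_*\cO_{\cH_L}\to0$; no expected-dimension hypothesis is needed for that.)

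The generation step needs the same repair wherever it invokes $j^!$, and your sketch of it is in any case vaguer than the first half. The paper's route avoids pushing $F$ down to $X$ altogether: for $E$ orthogonal to every piece of \eqref{sodd} one restricts $E$ to each fibre of $\pi\colon\cH_L\to X$ and applies Beilinson fibrewise. Away from $X_{L^\perp}$ the fibre is $\P{\ell-2}$ and $\pi$ is flat there, so orthogonality to $\pi^*\cO_x(0,k)$ for $1\le k\le\ell-1$ kills $E_x$. Over $x\in X_{L^\perp}$ the fibre jumps to $\PP(L)=\P{\ell-1}$; computing $\pi^*\cO_x$ via the triangle coming from $\iota$ being a $(1,1)$-divisor, the orthogonality to $\pi^*\cO_x(0,k)$ together with orthogonality to $j_*p^*\cO_x$ yields $R\Hom(E_x,\cO_{\PP(L)}(k))=0$ for all $0\le k\le\ell-1$, which is exactly the one extra vanishing needed for Beilinson on $\P{\ell-1}$. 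This makes precise your (correct) heuristic that the jumping fibres require one additional orthogonality supplied by $D\big(X_{L^\perp\!}\big)$, but it does so without any appeal to $j$ being divisorial. As written, your argument proves the proposition only for $\ell=2$, where it reduces to Orlov's blow-up theorem.
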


\noindent Here by $(i,j)$ we mean the twist by $\cO(i,j)$, the restriction of $\cO_X(i)\boxtimes\cO_{\PP(V)}(j)$ to $\cH_L\subset X\times\PP(L)$.

\begin{proof}
The baselocus $X_{L^\perp}\!\subset X$ is cut out by sections of $\cO_X(1)$, one for each element of a basis of $L$. Invariantly, it is cut out by the section
$$
\sigma\,\in\,H^0\big(\cO_X(1)\otimes L^*\big)\ \cong\ 
\Hom\big(L,H^0(\cO_X(1))\big)
$$
corresponding to the inclusion $L\subseteq V\subseteq H^0(\cO_X(1))$.
By the assumption on expected dimensions, $\sigma$ is a regular section of $\cO_X(1)\otimes L^*$.

Pulling $\cO_X(1)\otimes L^*$ back to $X\times\PP(L)$, it sits inside an exact sequence
\beq{offi}
0\To\cO_X(1)\boxtimes\Omega_{\PP(L)}(1)\To
\cO_X(1)\boxtimes L^*\Rt{\mathrm{ev}}\cO_X(1)\boxtimes\cO_{\PP(L)}(1)\To0
\eeq
given by the (dual) Euler sequence on $\PP(L)$ tensored by $\cO_X(1)$.  In \eqref{offi} the section $\sigma$ projects to the section
$$
\mathrm{ev}\circ\sigma\,\in\,H^0\big(\cO_{X\times\PP(L)}(1,1)\big)\ \cong\ \Hom\big(L,H^0(\cO_X(1))\big)
$$
which also corresponds to the inclusion $L\subseteq H^0(\cO_X(1))$. Its zero locus is therefore $\cH_L\subset X\times\PP(L)$, on restriction to which $\sigma$ lifts canonically to a section $\widetilde\sigma$ the kernel of \eqref{offi},
$$
\widetilde\sigma\,\in\,H^0\big(\Omega_{\PP(L)}(1,1)|_{\cH_L}\big).
$$
Again by the assumption on dimensions this is a regular section cutting out $X_{L^\perp\!}\times\PP(L)\subset\cH_L$, so the normal bundle of $j$ is
\beq{NNN}
N_j\,\cong\,\cO_{X_{L^\perp}}\!(1)\boxtimes\Omega_{\PP(L)}(1).
\eeq
\smallskip

Next we show that the counit
\beq{coun}
j^*j_*\To\id
\eeq
induces an isomorphism
\beq{isom}
R\Hom(j^*j_*p^*E,p^*E)\oT R\Hom(p^*E,p^*E)
\eeq
for any $E\in D(X_{L^\perp\!})$. Since the first term is $R\Hom(j_*p^*E,j_*p^*E)$ and the second is $R\Hom(E,E)$, this will prove that $j_*p^*$ is fully faithful.

By a standard Koszul computation of (the cohomology sheaves of) the Fourier-Mukai kernel of $j^*j_*$, we see the cone on \eqref{coun} is an iterated extension of the functors
$$
\Lambda^rN_j^*[r]\otimes(\ \cdot\ ), \quad 1\le r\le\ell-1.
$$
Therefore the cone on \eqref{isom} is an extension of the groups
$$
R\Hom\big(\Lambda^rN_j^*\otimes p^*E,p^*E\big)[-r]\ \cong\ 
R\Hom\big(E,E\otimes p_*(\Lambda^rN_j)\big)[-r].
$$
But by \eqref{NNN},
$$
p_*(\Lambda^rN_j)\ =\ p_*\Big(\cO_{X_{L^\perp}}\!(r)\boxtimes\Omega^r_{\PP(L)}(r)\Big)\,=\,0,
$$
since $\Omega^r_{\P{l-1}}(r)$ is acyclic for $1\le r\le\ell-1$. In particular \eqref{isom} is an isomorphism and $j_*p^*$ is fully faithful as claimed. \medskip

Next we check that $\pi^*D(X)(0,k)$ is in the left orthogonal to $j_*p^*D\big(X_{L^\perp}\big)$ for $1\le k\le\ell-1$. Picking $E\in D\big(X_{L^\perp\!}\big)$ and $F\in D(X)$,
\begin{align}\nonumber
R\Hom\big(\pi^*F(0,k),j_*p^*E\big)=R\Hom&\big(j^*\pi^*F(0,k),p^*E\big) \\ =R\Hom\big(p^*i^*F(0,k),\,&p^*E\big)=R\Hom\big(i^*F,E\otimes p_*\cO(0,-k)\big).\label{8}
\end{align}
This vanishes because $p_*\cO(0,-k)=0$. \medskip

Now we check that $\pi^*D(X)(0,k)$ is in the left orthogonal to $\pi^*D(X)(0,n)$ for $1\le n<k\le\ell-1$. So we pick $E,F\in D(X)$ and compute
\beqa
R\Hom\big(\pi^*F(0,k),\pi^*E(0,n)\big) &=&
R\Hom\big(\iota^*\rho^*F(0,k),\iota^*\rho^*E(0,n)\big) \\
&=& R\Hom\big(\rho^*F,\iota_*\iota^*\rho^*E(0,n-k)\big).
\eeqa
By the exact triangle $G(-1,-1)\to G\to \iota_*\iota^*G$ with $G=\rho^*E(0,n-k)$
it is sufficient to show
$$
R\Hom\big(\rho^*F,\rho^*E(0,n-k)\big)\,=\,0\,=\,
R\Hom\big(\rho^*F,\rho^*E(-1,n-k-1)\big),
$$
which follows from $\rho_*\cO(0,n-k)=0=\rho_*\cO(0,n-k-1)$.
\medskip

The same argument with $n=k$ shows that each $\pi^*D(X)(0,k)$ is fully faithfully embedded in $D(\cH_L)$. \medskip

\noindent\textbf{Generation.} Finally we show our semi-orthogonal collection spans $D(\cH_L)$ by showing its left orthogonal is empty. So take
\beq{ta}
E\in{}^\perp\Big\langle D\big(X_{L^\perp\!}\big),\ \pi^*D(X)(0,1),\ \ldots\ ,\ \pi^* D(X)(0,\ell-1)\Big\rangle.
\eeq
By the same Nakayama lemma argument as at the end of Section \ref{bile}, to show $E=0$ it is sufficient to show that its derived restriction $E_x:=E|_{\pi^{-1}(x)}$ to any fibre of $\pi\colon\cH_L\to X$ is zero.

First suppose that $x\not\in X_{L^\perp}$, so that $\pi^{-1}(x)\cong\P{\ell-2}$ and $\pi$ is flat nearby. Thus the pushforward to $\cH_L$ of $\cO_{\pi^{-1}(x)}$ is $\pi^*$ applied to $\cO_x\in D(X)$, and
$$
R\Hom(E_x,\cO(k))=R\Hom\!\big(E,(\pi^*\cO_x)(k)\big)=0
$$
by \eqref{ta}.
Since the sheaves $\cO(k),\ 1\le k\le\ell-1,$ span $D(\P{\ell-2})$ by \eqref{Beil} it follows that $E_x=0$, as required.

\medskip Now take $x\in X_{L^\perp\!}$, so that $\pi^{-1}(x)\cong\P{\ell-1}$. Since $\pi$ is no longer flat near $\pi^{-1}(x)$ we instead used diagram \eqref{peche} to compute $\pi^*\cO_x$:
\beq{croi}
\pi^*\cO_x=\iota^*\rho^*\cO_x=\iota^*\cO_{\{x\}\times\PP(L)}=\iota^*\iota_*j_*\cO_{\{x\}\times\PP(L)}.
\eeq
(Here we are suppressing two different pushforward maps from $\{x\}\times\PP(L)$ into $X_{L^\perp\!}\times\PP(L)$ and $X\times\PP(L)$, which differ by $\iota_*j_*$.)
Since $\iota\colon\cH_L\into X\times\PP(L)$ is a $(1,1)$ divisor, we have an exact triangle $\id(-1,-1)[1]\to\iota^*\iota_*\to\id$. Applied to \eqref{croi} this gives
$$
j_*\cO_{\{x\}\times\PP(L)}(-1,-1)[1]\To\pi^*\cO_x\To j_*\cO_{\{x\}\times\PP(L)}.
$$
Tensoring with $\cO(0,k)$ and applying $R\Hom(E,\ \cdot\ )$ gives the exact triangle
\beq{zoro}
R\Hom\!\big(E_x,\cO_{\PP(L)}(k-1)\big)[1]\To0\To R\Hom\!\big(E_x,\cO_{\PP(L)}(k)\big)
\eeq
for $1\le k\le\ell-1$, because $R\Hom(E,(\pi^*\cO_x)(k))=0$ by assumption \eqref{ta}. But \eqref{ta} also gives
$$
0=R\Hom(E,j_*p^*\cO_x)=R\Hom\!\big(E,\cO_{\{x\}\times\PP(L)}\big)=R\Hom
\!\big(E_x,\cO_{\PP(L)}\big).
$$
Combined with \eqref{zoro} this gives the vanishing
$$
R\Hom\!\big(E_x,\cO_{\PP(L)}(k)\big)=0,\ \ 0\le k\le\ell-1,
$$
which by Beilinson \eqref{Beil} implies $E_x=0$.
\end{proof}

\noindent\textbf{Digression: the Cayley trick.}
Above we picked $\ell$ sections of the same line bundle $\cO_X(1)$ and related their complete intersection to their universal hyperplane in $\P{\ell-1}\times X$.

For more general local complete intersections we might consider $\ell$ sections of $\ell$ different line bundles $L_1,\ldots, L_\ell$, or more generally a section $s$ of a rank $\ell$ bundle $F$. The natural analogue of the universal hyperplane is then given by the zero locus $H_s$ of the section that $s$ defines of the $\cO(1)$ line bundle on the projective bundle
\beq{ppf}
\PP(F)\To X.
\eeq
Though \eqref{ppf} is no longer usually a product\footnote{When $F=\bigoplus_{i=1}^{\ell}\cO_X(d_i)$ it would be nice to work on the product of $X$ with the \emph{weighted} projective space $\PP(d_1,\ldots,d_\ell)$ to get a weighted version of HPD, but I have not been able to make this work.} $\P{\ell-1}\times X$, there is still an HPD story. The hyperplane $H_s\subset\PP(F)$ is generically a $\P{\ell-2}$-bundle, but a $\P{\ell-1}$-bundle over the zeros $Z(s)\subset X$ of $s\in H^0(F)$. This fact is called the ``Cayley trick" in \cite{IM}, where it is used to relate the cohomology of $H_s$ and the local complete intersection $Z(s)$, and categorified in \cite{KKLL}, showing it gives an embedding of derived categories. These follow directly from HPD I applied to the variety $\PP(F)$ and its $\cO(1)$ linear system, i.e. applied to
$$
\PP(F)\To\PP\big(H^0(\cO_\PP(1))^*\big)=\PP\big(H^0(X,F)^*\big).
$$

\section{Homological projective duality II}
\subsection{Lefschetz collections} \label{sods}

HPD I \eqref{hypthm} is really the whole of homological projective duality for the ``stupid Lefschetz decomposition" \cite[Proposition 9.1]{HPD}. However, it is not yet much of a duality between $X_{L^\perp}$ and $\cH_L$ because the latter's derived category is always bigger, containing that of the former. Kuznetsov gives a way to produce more interesting, smaller, examples inside $D(\cH)$.  When $D(X)$ admits certain semi-orthogonal decompositions he uses them to remove some of the copies of $D(X)$ from \eqref{sodd}, refining Proposition \ref{hypthm} by replacing $D(\cH_L)$ by its ``\emph{interesting part}".\medskip

\noindent\textbf{Example.}
The prototypical example is $X=\P{n-1}$. On passing to a degree-$d$ hypersurface $H\subset X$, an easy calculation shows that the right hand end $\cO(d),\ldots,\cO(n-1)$ of the Beilinson semi-orthogonal decomposition \eqref{Beil} remains an exceptional semi-orthogonal collection on restriction to $H$, but one cannot go any further. So Kuznetsov defines the ``interesting part" $\cC_H$ of $D(H)$ to be its right orthogonal:
\begin{eqnarray*}
\cC_H \!&:=&\! \Langle\cO_H(d),\cO_H(d+1),\ldots,\cO_H(n-1)\Rangle^\perp \\
\!&=&\! \big\{E\in D(H)\colon R\Hom(\cO(i),E)=0\ \,\mathrm{for}\ i=d,\ldots,n-1\big\}.
\end{eqnarray*}
This is a $\C$-linear triangulated category which is amazingly, \emph{fractional Calabi-Yau} \cite{Kufrac} --- some power of its Serre functor is a shift $[k]$ (cf. Remark \ref{intCY}). Interesting examples include
\begin{itemize}
\item Smooth even dimensional quadrics \cite{Ka, Kuq}, where $\cC_H$ is generated by two exceptional bundles (the ``spinor bundles") which are orthogonal to each other. Thus $\cC_H\cong D(\mathrm{pt}\sqcup\mathrm{pt})$. In families this leads to double covers of linear systems of quadrics --- see Section \ref{d=2}.
\item Smooth cubic fourfolds \cite{KuC}, where $\cC_H$ is the derived category of a K3 surface, noncommutative in general --- see Section \ref{d=3}.
\end{itemize}
One can put these categories $\cC_H$ together over the $\PP\big(H^0(\cO_{\P{n-1}}(d))\big)$ family of all $H$s. So we set $V=H^0(\cO_{\P{n-1}}(d))$ and let $\cH_L\subset\P{n-1}\times\PP(L)$ be the universal hypersurface as before, for any $L\subseteq V$. Then define
$$
\cC_{\cH_L}\,:=\,\Langle D(\PP(L))(d,0),\,D(\PP(L))(d+1,0),\,\ldots,\,D(\PP(L))(n-1,0)\Rangle^\perp.
$$
Putting $L=V$ gives $\cC_{\cH}$, which is what we will define to be the HP dual of $(\P{n-1},\cO(d))$ with its exceptional collection \eqref{Beil}. Below we will be able to refine HPD I \eqref{hypthm} by replacing $D(\cH_L)$ with the smaller subcategory $\cC_{\cH_L}$.

(Kuznetsov \cite{HPD} would ask further that the HP dual be \emph{geometric}: roughly that we have a $D(\PP(V))$-linear equivalence $D(Y)\cong\cC_{\cH}$ for some variety $Y\to\PP(V)$. In these notes, however, we consider this as a separate --- very important --- step, which we address in the examples of Section \ref{egs}.)\medskip

\noindent\textbf{General case.} For a more general smooth polarised variety $(X,\cO_X(1))$ Kuznetsov replaces \eqref{Beil} with what he calls a \emph{Lefschetz decomposition} of $D(X)$. For simplicity we restrict attention to a \emph{rectangular} Lefschetz decomposition of $D(X)$, which means an \emph{admissible}\footnote{Admissible means there exists a left adjoint to the inclusion functor. This is equivalent to the existence of a right adjoint, by Serre duality.} subcategory $\cA\subseteq D(X)$ generating a semi-orthogonal decomposition
\beq{rect}
D(X)\,=\,\Langle\cA,\cA(1),\ldots,\cA(i-1)\Rangle.
\eeq
For instance the example above corresponds to taking $(X,\cO_X(1))$ to be $(\P{n-1},\cO_{\P{n-1}}(d))$ with $i=n/d$ and
$$
\cA=\Langle\cO_{\P{n-1}},\cO_{\P{n-1}}(1),\ldots,\cO_{\P{n-1}}(d-1)\Rangle.
$$
(This can be generalised to $d\!\nmid\!n$ by using the non-rectangular Lefschetz decomposition $\Langle\cA,\cA(1),\ldots,\cA(i-2),\cA'(i-1)\Rangle$, where $\cA'\subset\cA$ is the span $\langle\cO_{\P{n-1}},\cO_{\P{n-1}}(1),\ldots,\cO_{\P{n-1}}(r-1)\rangle$ for $i=\lceil\frac nd\rceil$ and $n=(i-1)d+r$.)
Another example is $\Gr(2,2n+1)$ with $i=2n+1$ and
$$
\cA=\Langle S^{n-1}A,\ldots,A,\cO\Rangle,
$$ where $A\to\Gr(2,2n+1)$ is the universal rank 2 subbundle \cite{KuGr}. For many other examples see \cite{ICM}.

For a given hyperplane $H\subset X$ we get a semi-orthogonal decomposition
\beq{DH}
D(H)\,=\,\Langle\cC_H,\cA(1),\cA(2),\ldots,\cA(i-1)\Rangle
\eeq
by setting $\cC_H$ to be the right orthogonal of $\cA(1),\ldots,\cA(i-1)$ in $D(H)$. (These form a semi-orthogonal exceptional collection by an application the following Lemma to $\PP(L)\subset \PP(V)$, where $L=\langle s_H\rangle$ is the one dimensional subspace spanned by the section $s_H\in V$ cutting out $H$.)
In Remark \ref{intCY} we will see an easy proof that $\cC_H$ is Calabi-Yau under certain conditions.

\begin{lem} \label{vanish}
The functor $\pi^*\colon D(X)\to D(\cH_L)$ restricted to $\cA\subset D(X)$ defines an embedding $\cA\subset D(\cH_L)$. Then $R\Hom_{\cH_L}\!\big(\cA(\alpha,\beta),\cA\big)$ vanishes in each of the following cases:
\begin{itemize}
\item $0<\alpha<i-1$,
\item $0<\beta<\ell-1$,
\item $\alpha=0,\ \beta=\ell-1$,
\item $\alpha=i-1,\ \beta=0$.
\end{itemize}
In particular, $\cA(1)\boxtimes D(\PP(L)),\ldots,\cA(i-1)\boxtimes D(\PP(L))$ form a semi-orthogonal collection in $D(\cH_L)$.
\end{lem}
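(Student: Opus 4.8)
The plan is to reduce every $\Hom$ in the lemma to a $\Hom$ on $X$ by pushing down along $\pi\colon\cH_L\to X$, exploiting that $\iota\colon\cH_L\hookrightarrow X\times\PP(L)$ is a divisor of class $\cO(1,1)$, and then to read off the four vanishings from the semi-orthogonality already built into the rectangular Lefschetz decomposition \eqref{rect}. Write $\rho\colon X\times\PP(L)\to X$ and $q\colon\cH_L\to\PP(L)$ for the two projections, so $\pi=\rho\circ\iota$.

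\emph{Step 1: a pushforward.} First I would compute $R\pi_*\cO_{\cH_L}(0,-\beta)$ for $0\le\beta\le\ell-1$: twist the structure sequence $0\to\cO(-1,-1)\to\cO\to\iota_*\cO_{\cH_L}\to0$ by $\cO(0,-\beta)$ and apply $R\rho_*$, which turns the two outer terms into $\cO_X\otimes R\Gamma\big(\PP^{\ell-1},\cO(-\beta)\big)$ and $\cO_X(-1)\otimes R\Gamma\big(\PP^{\ell-1},\cO(-\beta-1)\big)$. For $0\le\beta\le\ell-2$ the only surviving cohomology is $H^0(\PP^{\ell-1},\cO)=\C$ at $\beta=0$, while for $\beta=\ell-1$ the line bundle $\cO_{\PP^{\ell-1}}(-\ell)$ contributes its top cohomology. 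So the expected answer is
$$
R\pi_*\cO_{\cH_L}(0,-\beta)\ =\ \begin{cases}\cO_X, & \beta=0,\\ 0, & 0<\beta<\ell-1,\\ \cO_X(-1)[2-\ell], & \beta=\ell-1.\end{cases}
$$
The case $\beta=0$ says $R\pi_*\cO_{\cH_L}=\cO_X$, so $\pi^*$ is fully faithful on all of $D(X)$ by the projection formula, which in particular embeds $\cA$.

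\emph{Step 2: the four vanishings.} Next, for $a,a'\in\cA$ the projection formula gives
$$
R\Hom_{\cH_L}\big(\pi^*a\otimes\cO(\alpha,\beta),\,\pi^*a'\big)\ \cong\ R\Hom_X\big(a(\alpha),\,a'\otimes R\pi_*\cO_{\cH_L}(0,-\beta)\big),
$$
and I would run through the cases using Step 1. If $0<\beta<\ell-1$ the right side vanishes outright (covering the second bullet, and the first whenever $1\le\beta\le\ell-2$). If $\beta=0$ it is $R\Hom_X(\cA(\alpha),\cA)$, which vanishes by semi-orthogonality of \eqref{rect} for $0<\alpha\le i-1$ (covering the fourth bullet and the first with $\beta=0$). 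If $\beta=\ell-1$ it is $R\Hom_X(\cA(\alpha+1),\cA)[2-\ell]$, which vanishes by semi-orthogonality of \eqref{rect} for $0\le\alpha\le i-2$ (covering the third bullet and the first with $\beta=\ell-1$); the shift plays no role.

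\emph{Step 3: the semi-orthogonal collection.} For the ``in particular'', read $\cA(k)\boxtimes D(\PP(L))$ as generated by the objects $\pi^*a\otimes q^*\delta\otimes\cO(k,0)$ with $a\in\cA$, $\delta\in D(\PP(L))$. For $1\le k'<k\le i-1$ the $\Hom$ from $\cA(k)\boxtimes D(\PP(L))$ to $\cA(k')\boxtimes D(\PP(L))$ is, after twisting, $R\Hom_{\cH_L}\big(\pi^*(a(\gamma)),\,\pi^*a'\otimes q^*(\delta'\otimes\delta^\vee)\big)$ with $\gamma=k-k'$, so $0<\gamma<i-1$. Since $\delta'\otimes\delta^\vee$ lies in $\langle\cO,\cO(-1),\ldots,\cO(1-\ell)\rangle$ by Beilinson's presentation \eqref{explic} on $\PP^{\ell-1}$, this group vanishes once each $R\Hom_{\cH_L}\big(\pi^*(a(\gamma)),\pi^*a'(0,-m)\big)=R\Hom_{\cH_L}\big(\cA(\gamma,m),\cA\big)$ does for $0\le m\le\ell-1$ — and these all vanish by the first bullet. (The same $\cO(-1,-1)\to\cO$ computation, keeping the $\PP(L)$-factor, shows $\cA\boxtimes D(\PP(L))\to D(\cH_L)$ is itself fully faithful, so each $\cA(k)\boxtimes D(\PP(L))$ is honestly admissible.)

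The only step with any real content is Step 1, and within it the point to watch is the jump to $\cO_X(-1)[2-\ell]$ at $\beta=\ell-1$, coming from $\cO_{\PP^{\ell-1}}(-\ell)$ ceasing to be acyclic; everything after that is the projection formula together with the semi-orthogonality of \eqref{rect}, plus careful bookkeeping of the bidegree twists.
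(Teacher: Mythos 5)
Your argument is essentially the paper's own proof in different packaging: the paper applies the triangle $\id(-1,-1)\to\id\to\iota_*\iota^*$ on $X\times\PP(L)$ and computes via K\"unneth, which is word-for-word your Step 1 plus Step 2 after pushing down to $X$ by the projection formula; the resulting two-term complex $R\Hom_X(\cA(\alpha+1),\cA)\otimes R\Gamma(\cO_{\PP(L)}(-\beta-1))\to R\Hom_X(\cA(\alpha),\cA)\otimes R\Gamma(\cO_{\PP(L)}(-\beta))$ is identical to \eqref{number}, and your case analysis of the four bullets matches the paper's. Step 3 is a correct elaboration of the ``in particular'', which the paper leaves to Beilinson.

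Two small caveats. First, your Step 1 table and the ensuing claim that $\pi^*$ is fully faithful on \emph{all} of $D(X)$ fail for $\ell=1$: there $\PP(L)$ is a point, $\cO_{\PP^{0}}(-1)=\cO_{\PP^{0}}$ is not acyclic, $\pi$ is the inclusion of a hyperplane section, and $R\pi_*\cO_{\cH_L}\ne\cO_X$. Since the whole point of the embedding statement (as the paper notes) is to cover $\ell=1$, where Proposition \ref{hypthm} does not apply, you should instead kill the error term $R\Hom_X(\cA(1),\cA)\otimes R\Gamma(\cO_{\PP(L)}(-1))$ using the semi-orthogonality $R\Hom_X(\cA(1),\cA)=0$ rather than acyclicity on $\PP(L)$ — exactly as the paper does. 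Second, the first bullet carries no restriction on $\beta$ (and is later invoked with $\beta$ outside $[0,\ell-1]$), whereas you only tabulate $R\pi_*\cO(0,-\beta)$ for $0\le\beta\le\ell-1$; the fix is immediate from your own triangle, since for any $\beta$ both terms are tensor products of $R\Hom_X(\cA(\alpha),\cA)$ or $R\Hom_X(\cA(\alpha+1),\cA)$ with graded vector spaces, and both Hom-groups vanish when $0<\alpha<i-1$.
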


\begin{proof}
Since $\cH_L\subset X\times\PP(L)$ is a (1,1) divisor, we have an exact triangle of functors $\id(-1,-1)\to\id\to\iota_*\iota^*$ on $D(X\times\PP(L))$. It follows that
\beqa
R\Hom_{\cH_L}\!\big(\pi^*\cA(\alpha,\beta)
,\pi^*\cA\big) &=&
R\Hom_{\cH_L}\!\big(\iota^*\rho^*\cA(\alpha,\beta)
,\iota^*\rho^*\cA\big) \\ 
&=& R\Hom_{X\times\PP(L)}\!\big(\rho^*\cA(\alpha,\beta)
,\iota_*\iota^*\rho^*\cA\big)
\eeqa
is the cone on
$$
R\Hom_{X\times\PP(L)}\big(\rho^*\cA(\alpha+1,\beta+1),\rho^*\cA\big)\To
R\Hom_{X\times\PP(L)}\big(\rho^*\cA(\alpha,\beta),\rho^*\cA\big).
$$
By the K\"unneth formula this equals
\begin{align}\nonumber
R\Hom\_X\!\big(\cA(\alpha+1),\cA\big)\otimes&R\Gamma\big(\cO_{\PP(L)}(-\beta-1)\big)\\ & \To R\Hom\_X\!\big(\cA(\alpha),\cA\big)\otimes R\Gamma\big(\cO_{\PP(L)}(-\beta)\big). \label{number}
\end{align}
The first term vanishes for $0\le\alpha<i-1$, the second for $0\le\beta<\ell-1$, the third for $0<\alpha\le i-1$ and the fourth for $0<\beta\le\ell-1$. This proves full and faithfullness for $\alpha=0=\beta$ (we already knew it for $\ell>1$ by Proposition \ref{hypthm}, but not for $\ell=1$) and gives vanishing for $(\alpha,\beta)$ on the list. Recalling the Beilinson semi-orthogonal decomposition \eqref{Beil} gives the claimed semi-orthogonal collection in $D(\cH_L)$.
\end{proof}

Therefore for any linear subspace $L\subseteq V$ we can define
\beq{bath}
\cC_{\cH_L}\,:=\,\Langle\cA(1)\boxtimes D(\PP(L)),\,\ldots\,,\,\cA(i-1)\boxtimes D(\PP(L))\Rangle^\perp
\eeq
in $D(\cH_L)$. This defines $\cC_{\cH}$ by taking $L=V$ in \eqref{bath}. It is a $D(\PP(V))$-linear category whose
basechange\footnote{For $\cC_{\cH}$ geometric this is obvious; more generally it is \cite[Proposition 5.1]{KuBC}. As in \cite{KuBC}, basechange means the smallest category containing the derived restrictions of objects in $\cC_{\cH}$ and closed under taking direct summands. (When $\cH_L$ is singular one also has to take a completion.)} to $\PP(L)\subseteq\PP(V)$ is $\cC_{\cH_L}$.

\begin{defn} \label{defff} Fix $X\to\PP(V)$ and a rectangular Lefschetz collection \eqref{rect}. We call the resulting category $\cC_{\cH}$ the \emph{homological projective dual} of $D(X)$.
\end{defn}

Using the Beilinson semi-orthogonal decomposition \eqref{Beil} to further split the subcategories $\cA(k)\boxtimes D(\PP(L))$ we illustrate HPD I \eqref{sodd} via the white boxes in Figure \ref{fig7} below. 

\begin{figure}[htbp]
\begin{center}
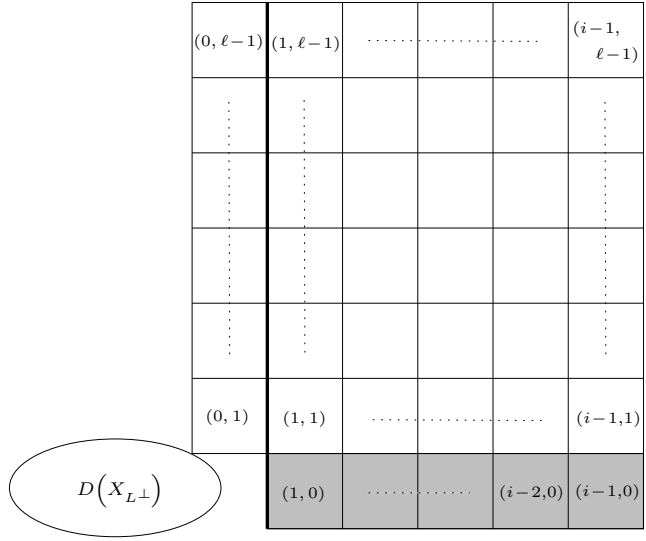
\caption{\footnotesize The \emph{white} boxes and ellipse illustrate the semi-orthogonal decomposition  $D(\cH_L)=$ 
$\Langle D\big(X_{L^\perp\!}\big),\ \pi^*D(X)(0,1),\ \ldots\ ,\ \pi^* D(X)(0,\ell-1)\Rangle$
of \eqref{sodd}, with $\cA(\alpha,\beta)$ in the $(\alpha,\beta)\;$th box. \emph{All} boxes to the right of the thick line represent ${}^\perp\cC_{\cH_L}$ \eqref{bath}. } \label{fig7}
\end{center}
\end{figure}
To the right of the thick line the white and grey boxes represent the subcategory $\Langle\cA(1)\boxtimes D(\PP(L)),\,\ldots\,,\,\cA(i-1)\boxtimes D(\PP(L))\Rangle$ whose right orthogonal defines $\cC_{\cH_L}$ \eqref{defff}.  Notice its semi-orthogonal decomposition is not yet compatible with the decomposition of HPD I; in particular each grey box does not fit inside any one unique white box of $D(\cH_L)$.
 
The passage from HPD I to HPD II can be paraphrased loosely (up to mutations specified in the proof) in terms of this diagram as follows. For $i\le\ell$, the $i-1$ grey boxes fit into the top $i-1$ white boxes in the left hand column, and the remaining white boxes below it are then $\cC_{\cH_L}$, which we see contains $D\big(X_{L^\perp\!}\big)$ (or equals it for $i=\ell$).

When $i\ge\ell$ the inclusion goes in the opposite direction. The first $\ell-1$ grey boxes fit into the white boxes of the left hand column. The remaining $i-l$ grey boxes then mutate into $D\big(X_{L^\perp\!}\big)$, which therefore contains $\cC_{\cH_L}$ as their right orthogonal.

\subsection{HPD II}
Pick a linear subspace $L\subseteq V$. Equivalently we fix a linear subspace $L^\perp\!\subset V^*$. Projecting Proposition \ref{hypthm} into $\cC_{\cH_L}$ (for $\ell\ge i$) or into $D\big(X_{L^\perp\!}\big)$ (for $\ell\le i$) gives a close relationship between the derived category of the resulting linear section $X_{L^\perp}$ of $X$ (the baselocus of the linear system $\PP(L)$) and the orthogonal linear section $\cC_{\cH_L}$ of the HP dual $\cC_{\cH}$.

\begin{thm} \label{hpdthm} Projecting the subcategories of $D(\cH_L)$ of Proposition \ref{hypthm} into $\cC_{\cH_L}$ or $D\big(X_{L^\perp\!}\big)$ gives the following semi-orthogonal decompositions.
\begin{itemize}
\item If $\ell>i$ then $\cC_{\cH_L}=\Langle D\big(X_{L^\perp\!}\big),\ \cA(0,1),\ \cA(0,2),\,\ldots\,,\,\cA(0,\ell-i)\Rangle.$
\item If $\ell=i$ then $\cC_{\cH_L}\cong D\big(X_{L^\perp\!}\big).$
\item If $\ell<i$ then $D\big(X_{L^\perp\!}\big)=\Langle\cC_{\cH_L},\ \cA(1,0),\ \cA(2,0),\,\ldots\,,\,\cA(i-\ell,0)\Rangle.$
\end{itemize}
\end{thm}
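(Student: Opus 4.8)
The plan is to deduce all three decompositions from the single one, \eqref{sodd}, by rearranging its pieces --- that is, by mutations. First I would put \eqref{sodd} in ``atomic'' form: decompose each $\pi^*D(X)(0,\beta)$ by the rectangular Lefschetz collection \eqref{rect}, and each $D(\PP(L))$-factor of \eqref{bath} by Beilinson \eqref{Beil}, so that $D(\cH_L)$ (via HPD I) and its subcategory ${}^\perp\cC_{\cH_L}$ are both expressed through the blocks $\cA(\alpha,\beta)$ laid out on the integer grid of Figure \ref{fig7}. In these terms \eqref{sodd} occupies $D\big(X_{L^\perp\!}\big)$ together with the rectangle $\{\,0\le\alpha\le i-1,\ 1\le\beta\le\ell-1\,\}$ read row by row, while ${}^\perp\cC_{\cH_L}$ must be the rectangle $\{\,1\le\alpha\le i-1,\ 0\le\beta\le\ell-1\,\}$ read column by column. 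The two regions differ precisely in the bottom row $\{(\alpha,0):1\le\alpha\le i-1\}$ --- the $i-1$ grey blocks of Figure \ref{fig7}, present in ${}^\perp\cC_{\cH_L}$ but not in \eqref{sodd} --- and in the leftmost column $\{(0,\beta):1\le\beta\le\ell-1\}$ together with $D\big(X_{L^\perp\!}\big)$, present in \eqref{sodd} but not in ${}^\perp\cC_{\cH_L}$. So the theorem amounts to: mutate \eqref{sodd} until ${}^\perp\cC_{\cH_L}$ is a front segment; then its complement is $\big({}^\perp\cC_{\cH_L}\big)^\perp=\cC_{\cH_L}$ by \eqref{bath}, with a semi-orthogonal decomposition one reads off directly.

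Two kinds of mutation are needed. Reordering within a fixed rectangle --- passing from the row-by-row order of \eqref{sodd} to the column-by-column order of \eqref{bath} --- only ever transposes a pair $\cA(\alpha,\beta),\cA(\alpha',\beta')$ in which $\alpha$ and $\beta$ move in opposite directions, and every such pair is \emph{bi}-orthogonal: both $R\Hom$'s vanish by Lemma \ref{vanish} applied to $(\alpha-\alpha',\beta-\beta')$, one entry landing in $0<\,\cdot\,<i-1$ and the other in $0<\,\cdot\,<\ell-1$. These swaps are routine. The only nontrivial move manufactures a bottom-row block $\cA(\alpha,0)$, which is not literally among the pieces of \eqref{sodd}; it is produced by mutating pieces of the leftmost column --- and, once that column is used up, of $D\big(X_{L^\perp\!}\big)$ --- past the intervening columns. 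Here one exploits that $\pi\colon\cH_L\to X$ is only a $\P{\ell-2}$-bundle away from $X_{L^\perp\!}$, jumping to a $\P{\ell-1}$-bundle over it: the mutations unwind this jump using the two structural triangles already used above --- the $(1,1)$-divisor triangle $\id(-1,-1)\to\id\to\iota^*\iota_*$ for $\iota\colon\cH_L\into X\times\PP(L)$, and the Koszul resolution of $j_*\cO_{X_{L^\perp\!}\times\PP(L)}$ by the bundles $\Lambda^rN_j^*\cong\cO_X(-r)\boxtimes\Omega^r_{\PP(L)}(-r)$ of \eqref{NNN} --- the correction term supported over $X_{L^\perp\!}$ being what carries the extra copy of $D\big(X_{L^\perp\!}\big)$. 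The net effect is a ``staircase'': the grey blocks slide, in order, into the top of the leftmost column of \eqref{sodd}, and then (when $i>\ell$) on into $D\big(X_{L^\perp\!}\big)$.

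Counting now produces the trichotomy, matching the paraphrase of Figure \ref{fig7}. There are $\ell-1$ blocks in the leftmost column of \eqref{sodd} and $i-1$ grey blocks to create. If $\ell>i$, the grey blocks fill the top $i-1$ of that column and the surviving $\ell-i$ blocks $\cA(0,1),\dots,\cA(0,\ell-i)$ stay behind with $D\big(X_{L^\perp\!}\big)$, giving $\cC_{\cH_L}=\Langle D\big(X_{L^\perp\!}\big),\cA(0,1),\dots,\cA(0,\ell-i)\Rangle$. If $\ell=i$ the column is used up exactly, leaving only $D\big(X_{L^\perp\!}\big)$, so $\cC_{\cH_L}\cong D\big(X_{L^\perp\!}\big)$. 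If $\ell<i$ there are $i-\ell$ grey blocks with no column to absorb them; here it is cleaner to project \eqref{sodd} into $D\big(X_{L^\perp\!}\big)$ instead, running the staircase in reverse, and these $i-\ell$ blocks mutate into $D\big(X_{L^\perp\!}\big)$, giving $D\big(X_{L^\perp\!}\big)=\Langle\cC_{\cH_L},\cA(1,0),\dots,\cA(i-\ell,0)\Rangle$ --- where the $\cA(k,0)$ are the restrictions to $X_{L^\perp\!}$ of the Lefschetz blocks $\cA(k)$, admissible and semi-orthogonal there by Lemma \ref{vanish}.

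The remaining assertions come for free from this set-up: each block of the mutated decomposition is a block of a genuine semi-orthogonal decomposition of $D(\cH_L)$, hence fully faithfully embedded, and the projection onto $\cC_{\cH_L}$ (resp.\ onto $D(X_{L^\perp\!})$) restricts to an equivalence onto it; generation is automatic since \eqref{sodd} generates $D(\cH_L)$ by Proposition \ref{hypthm} and mutations preserve generation, so the complementary segment really is all of $\cC_{\cH_L}$ (resp.\ $D(X_{L^\perp\!})$). The step I expect to be the genuine obstacle is making the ``staircase'' of the second paragraph precise: tracking, as a column-$0$ block is mutated past the intervening columns, exactly which Koszul terms of \eqref{NNN} survive over $X_{L^\perp\!}$, and hence whether the corrections reassemble into a single shifted copy of $D\big(X_{L^\perp\!}\big)$ or into another block $\cA(0,\bullet)$ --- the ``lengthy, tedious cohomology and mutation calculation'' the introduction promises. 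Everything else is either Lemma \ref{vanish} or bookkeeping, so I would do $\ell=i$ first (where the corner just fits and no $D(X_{L^\perp\!})$-accounting is needed), then $\ell>i$, then get $\ell<i$ by reversing the argument.
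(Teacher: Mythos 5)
Your strategy --- refine \eqref{sodd} and \eqref{bath} into the atomic blocks $\cA(\alpha,\beta)$, observe that the two regions differ only in the bottom row $\{(\alpha,0)\}_{1\le\alpha\le i-1}$ versus the left column $\{(0,\beta)\}_{1\le\beta\le\ell-1}$ together with $D\big(X_{L^\perp\!}\big)$, and mutate until ${}^\perp\cC_{\cH_L}$ becomes a terminal segment --- is exactly the paper's strategy (it is the loose paraphrase the paper itself gives after Figure \ref{fig7}), and your counting and trichotomy are correct. But the proposal stops precisely where the proof begins: the ``staircase'' you defer is not a loose end, it is the entire mathematical content. Concretely, what is missing is (a) the inductive computation showing that when $b\in\cA(0,1)$ (or $b\in j_*p^*D\big(X_{L^\perp\!}\big)$) is left mutated past $\cA(i-1)\boxtimes D(\PP(L)),\ldots,\cA(1)\boxtimes D(\PP(L))$, only a triangular set of sub-blocks --- $\cA(\alpha,\beta)$ with $\alpha\in[1,i+\beta-1]$, $\beta\in[2-i,0]$, identified by choosing a suitably \emph{shifted} Beilinson decomposition of $\cA(i-k)\boxtimes D(\PP(L))$ at the $k$th step --- receives Homs from the partial mutation, so that $\mathrm{Cone}(b\to\pi_Lb)$ lies in the span \eqref{guff}; and (b) the Serre-duality/Koszul computation \eqref{vanny} showing $j_*p^*D\big(X_{L^\perp\!}\big)$ has no Homs into that span. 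Without these, full faithfulness and semi-orthogonality of the projected pieces are unsupported. (Your ``bi-orthogonality of transposed pairs'' observation is fine inside the rectangle $1\le\beta,\beta'\le\ell-1$, but note it fails for pairs with $\beta'-\beta=\ell-1$, which is exactly why the bottom-row blocks cannot be slid into place by routine transpositions.)

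More seriously, ``generation is automatic since mutations preserve generation'' is circular. It would be automatic had you exhibited an actual mutation of the full decomposition \eqref{sodd} ending with ${}^\perp\cC_{\cH_L}$ as a terminal segment; but your construction only moves the pieces $D\big(X_{L^\perp\!}\big),\ \cA(0,1),\ldots,\cA(0,\ell-i)$, and it remains to show that the leftover blocks $\cA(0,\ell-i+1),\ldots,\cA(0,\ell-1)$ of \eqref{sodd} lie in the span of the claimed collection. In the paper this is a separate and substantial argument: each $\cA(0,k)$ is mutated past the preceding terms of \eqref{sodd} so that it lands in the blocks \eqref{label} (after twisting by $K_{\cH_L}\cong K_X(1,1-\ell)$), and its components in the forbidden blocks $\cA(\alpha,\beta)$, $\beta\in[k+1-\ell,0]$, $\alpha\in[i+\beta,i]$, are then killed by a double induction, each component being detected by Homs from $\cA(\alpha-1,\beta+\ell-1)$ via \eqref{number}. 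This is where the hypothesis $\ell\ge i$ is used quantitatively, and nothing in your outline produces it for free; the same gap recurs, transposed, in the $\ell<i$ case.
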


\begin{rmk}
In particular the first result with $L=V$ shows that $\cC_{\cH}$ has a rectangular Lefschetz collection
$$
\cC_{\cH}\,=\,\Langle\cA(0,0),\ \cA(0,1),\,\ldots\,,\,\cA(0,j-1)\Rangle, \qquad j:=\dim V-i,
$$
similar to that of $D(X)$ \eqref{rect}. (Here we have used the invariance of $\cC_{\cH}$ under tensoring by $\cO(0,-1)$.) Applying the same construction to it,\footnote{In other words we replace the $D(\PP(V^*))$-linear category $D(X)$ by the $D(\PP(V))$-linear category $\cC_\cH$ throughout Section 3.1. With some work one can make sense of this \cite{Pe}.}
Theorem \ref{hpdthm} shows its HP dual is $D(X)$, so HPD is indeed a duality.
\end{rmk}

\begin{rmk} \label{HPDIII}
Picking our functors slightly differently, the third result can be rewritten
$$
D\big(X_{L^\perp\!}\big)\,=\,\Langle\cC_{\cH_L},\ \cA(\ell),\ \cA(\ell+1),\,\ldots\,,\,\cA(i-1)\Rangle.
$$ 
This should be compared to \eqref{DH}. The moral is that when $\ell\le i$, to pass from $D(X)$ to a codimension-$\ell$ linear section $D\big(X_{L^{\perp\!}}\big)$ we
\begin{itemize}
\item lose the first $\ell$ copies of $\cA$ in the Lefschetz decomposition, and 
\item gain the (restriction to $X_{L^\perp\!}$ of) the $\PP(L)=\P{\ell-1}$ family of categories $\cC_H$, where $H$ runs through the hyperplanes containing $X_{L^\perp}$.
\end{itemize}
\end{rmk}

\begin{rmk} \label{intCY}
Mutating $\cA$ to the right hand end of the Lefschetz collection shows that $\cA(i)=\cA\otimes K_X^{-1}$, so $K_X$ is close to being $\cO_X(-i)$. If in fact $K_X\cong\cO_X(-i)$, the complete intersection $X_{L^\perp}$ of $i$ hyperplanes is Calabi-Yau, so Theorem \ref{hpdthm} shows that the corresponding $\cC_{\cH_L}$ is also Calabi-Yau.  

In the geometric case this gives a cheap proof that its fibre $\cC_H$ over any hyperplane $H\in\PP(L)=\PP^{i-1}$ is also Calabi-Yau.

To deduce this last step more generally note the inclusion $I\colon H\into\cH_L$ has trivial normal bundle, so $I_!=I_*[-(\ell-1)]$. Since the Serre functor of $\cC_{\cH_L}$ is a shift $[N]$ and $I_*,\,I^*$ preserve the subcategories $\cC$, we have
\begin{multline*}
R\Hom(I^*F,I^*G)\cong R\Hom(F,I_*I^*G)\cong R\Hom(I_*I^*G,F)^\vee[-N] \\ \cong R\Hom(I_!I^*G,F)^\vee[-N+\ell-1]\cong R\Hom(I^*G,I^*F)^\vee[-N+\ell-1]
\end{multline*}
for $F,G\in\cC_{\cH_L}\subset D(\cH_L)$.
But objects of the form $I^*F$ split generate $\cC_H$ 
\cite[Proposition 5.1]{KuBC} so this gives Serre duality with Serre functor a shift for the whole category. That is, $\cC_H$ is Calabi-Yau.
\end{rmk}

We split the proof of Theorem \ref{hpdthm} into two cases.

\begin{proof}[Proof of $\ell\ge i$ case.]
The projection $\pi_L\colon D(\cH_L)\to\cC_{\cH_L}$ is the left adjoint to the inclusion $\cC_{\cH_L}\into D(\cH_L)$. It is given by left mutation past the semi-orthogonal collection of Lemma \ref{vanish} (in reverse order):
$$
\pi\_L\ =\ \LL_{\cA(1)\boxtimes D(\PP(L))}\circ\ldots\circ\LL_{\cA(i-1)\boxtimes D(\PP(L))}\,.
$$
Restricted to any of the subcategories
$$
D\big(X_{L^\perp\!}\big),\ \cA(0,1),\ \cA(0,2),\,\ldots\,,\,\cA(0,\ell-i)
$$
given to us by Proposition \ref{hypthm}, we would like to show that $\pi\_L$ is fully faithful, and that it preserves the semi-orthogonal condition between them.

Now $\pi\_L$ commutes with $\otimes\cO(0,1)$ since \eqref{bath} does. Therefore it is sufficient to prove
\beq{keep}
R\Hom(\pi\_La,\pi\_Lb)=R\Hom(a,b)
\eeq
in each of the cases
\begin{enumerate}
\item[(i)] $a\in\cA(0,k),\ 1\le k\le\ell-i,\ \ b\in\cA(0,1)$,
\item[(ii)] $a\in\cA(0,k),\ 1\le k\le\ell-i,\ \ b\in D\big(X_{L^\perp\!}\big)$,
\item[(iii)] $a,b\in D\big(X_{L^\perp\!}\big)$.
\end{enumerate}
Since $\pi_L$ is the left adjoint of the inclusion $\cC_{\cH_L}\into D(\cH_L)$,
the left hand side of \eqref{keep} equals $R\Hom(a,\pi\_Lb)$.
To show this also equals the right hand side it is sufficient
to show that
\beq{ask}
R\Hom\!\big(a,\mathrm{Cone}\;(b\to\pi\_Lb)\big)=0
\eeq
in each of the cases (i),\,(ii) and (iii).

\medskip \noindent\textbf{Case (i).} To begin with we take $b\in\cA(0,1)$ and analyse $\pi\_Lb$, inspired by \cite[Lemma 5.6]{HPD}. We first left mutate past the category $\cA(i-1)\boxtimes D(\PP(L))$, which has a Beilinson semi-orthogonal decomposition into subcategories
$$
\cA(i-1)\boxtimes D(\PP(L))\ =\ \Langle\cA(i-1,0),\,\ldots\,,\,\cA(i-1,\ell-1)\Rangle
$$
of which only $\cA(i-1,0)$ has Homs to $b$ by Lemma \ref{vanish}. Thus
$$
b^{(1)}:=\LL_{\cA(i-1)\boxtimes D(\PP(L))}\,b\,=\,\LL_{\cA(i-1,0)}\,b.
$$

For $\cA(i-2)\boxtimes D(\PP(L))$ we use the \emph{shifted} Beilinson semi-orthogonal decomposition
$$
\Langle\cA(i-2,-1),\,\ldots\,,\,\cA(i-2,\ell-2)\Rangle.
$$
By Lemma \ref{vanish}, of these subcategories only $\cA(i-2,-1),\,\cA(i-2,0)$ have Homs to $b^{(1)}\in\Langle\cA(i-1,0),\cA(0,1)\Rangle$, so
$$
b^{(2)}:=\LL_{\cA(i-2)\boxtimes D(\PP(L))}\,b^{(1)}\,=\,\LL_{\cA(i-2,-1)}\LL_{\cA(i-2,0)}\,b^{(1)}.
$$

We continue inductively. After the $(k-1)\;$th stage we have mutated $b$ into $b^{(k-1)}$, lying in the category generated by $\cA(0,1)$ and
\beq{list1}
\cA(\alpha,\beta),\ \ \alpha\in[i-k+1,i+\beta-1],\ \beta\in[2-k,0],
\eeq
shaded grey in Figure \ref{fig1} below.

\begin{figure}[htbp]
\begin{center}
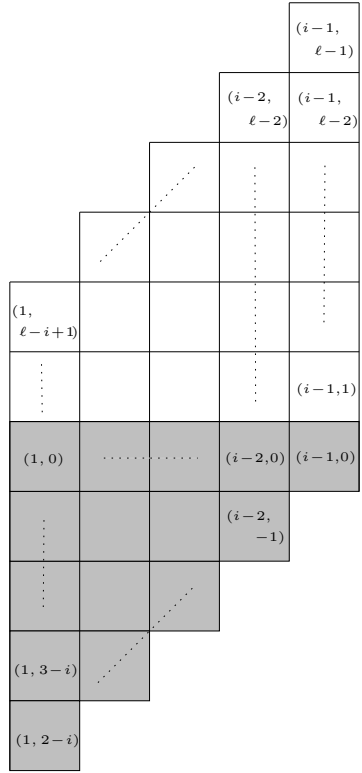
\caption{\small The semi-orthogonal collection $\cA(1)\boxtimes D(\PP(L)),\,\ldots,\,\cA(i-1)\boxtimes D(\PP(L))$, with $\cA(\alpha,\beta)$ in the $(\alpha,\beta)\;$th box. We left mutate $b$ past only the shaded subcategories.}
\label{fig1}
\end{center}
\end{figure}

To mutate past $\cA(i-k)\boxtimes D(\PP(L))$, we use the following shifted Beilinson semi-orthogonal decomposition,
$$
\Langle\cA(i-k,-k+1),\,\ldots\,,\,\cA(i-k,\ell-k)\Rangle.
$$
By Lemma \ref{vanish} only the first $k$ subcategories
have Homs to $b^{(k-1)}$, so
$$
b^{(k)}:=\LL_{\cA(i-k)\boxtimes D(\PP(L))}\,b^{(k-1)}\,=\,\LL_{\cA(i-k,-k+1)}\circ\ldots\circ\LL_{\cA(i-k,0)}\,b^{(k-1)}.
$$

Therefore finally we see that $b^{(i-1)}=\pi\_Lb$ is an iterated extension of $b$ and objects in the categories
\beq{guff}
\cA(\alpha,\beta),\ \ \alpha\in[1,i+\beta-1],\ \beta\in[2-i,0].
\eeq
Thus Cone$(b\to\pi\_Lb)$ lies in the span of \eqref{guff}. By Lemma \ref{vanish} this has no Homs from $a\in\cA(0,k),\ 1\le k\le\ell-i$. This proves \eqref{ask} in case (i).

\medskip \noindent\textbf{Case (ii).} Now take $b$ in the subcategory $j_*p^*D(X_{L^\perp\!})\subset D(\cH_L)$ of Proposition \ref{hypthm}. We saw there that $b$ has no Homs from $\pi^*D(X)(0,k)$ for $1\le k\le\ell-1$. In particular, it has no Homs from $\cA(\alpha,\beta)$ for $1\le\alpha\le i-1,\ 1\le\beta\le\ell-1$. But this is the \emph{only} property of $b$ that we used in the $b\in\cA(0,1)$ case above. So the same working shows again that Cone$(b\to\pi\_Lb)$ lies in the span of \eqref{guff}. By Lemma \ref{vanish} again, this proves \eqref{ask} in case (ii).

\medskip \noindent\textbf{Case (iii).} It also means that to prove \eqref{ask} in case (iii) it is sufficient to show that there are no Homs from $j_*p^*D(X_{L^\perp\!})$ to \eqref{guff}.
But for $E\in D(X_{L^\perp\!})$ and $F\in\cA\subset D(X)$, we have
\begin{eqnarray}
R\Hom(j_*p^*E,\pi^*F &&\hspace{-1cm} (\alpha,\beta))\ =\ R\Hom(p^*E,j^!\pi^*F(\alpha,\beta))
\nonumber \\ &=& R\Hom(p^*E,j^*\pi^*F(\alpha,\beta)\otimes\Lambda^{\ell-1}N_j)[1-\ell]
\nonumber \\ &=& R\Hom\!\big(p^*E,p^*i^*F(\alpha,\beta)\otimes\cO(\ell-1,-1)\big)[1-\ell]
\label{vanny} \end{eqnarray}
using the notation of diagram \eqref{peche} and
$j^!=j^*\otimes\omega_j[1-\ell]=j^*\otimes\bigwedge^{\ell-1}N_j[1-\ell]$, where $N_j\cong\cO_{X_{L^\perp}}\!(1)\boxtimes\Omega_{\PP(L)}(1)$ is the normal bundle to $j$ \eqref{NNN}. Since $p_*\cO(0,\beta-1)=0$ for $-\ell+2\le\beta\le0$ we get vanishing for $\beta\in[-\ell+2,0]\supseteq[-i+2,0]$; in particular for all $(\alpha,\beta)$ on the list \eqref{guff}.

\medskip \noindent\textbf{Generation.} 
Finally we must prove that these subcategories $\pi_LD\big(X_{L^\perp\!}\big)$ and $\pi_L\;\cA(0,1),$ $\ldots\,,\,\pi_L\;\cA(0,\ell-i)$  generate $\cC_{\cH_L}$. It is equivalent to show that
\beq{list}
D\big(X_{L^\perp\!}\big),\ \cA(0,1),\,\ldots\,,\,\cA(0,\ell-i),\ \cA(1)\boxtimes D(\PP(L)),\,\ldots\,,\,\cA(i-1)\boxtimes D(\PP(L))
\eeq
generate $D(\cH_L)$, since $\pi_L$ only altered subcategories on the left of \eqref{list} by terms on the right, leaving the total span unaffected. But the categories 
$$
D\big(X_{L^\perp\!}\big)\ \ \mathrm{and}\ \ \cA(\alpha,\beta),\ \ \ 0\le\alpha\le i-1,\ 1\le\beta\le\ell-1,
$$
generate $D(\cH_L)$ by HPD I (Proposition \ref{hypthm}), so it is enough to show they all lie in the span of \eqref{list}. This is immediate for all but the subcategories
\beq{2ndlist}
\cA(0,\ell-i+1),\,\ldots\,,\,\cA(0,\ell-1).
\eeq
Therefore we will show that the categories \eqref{2ndlist} lie in the span of \eqref{list}. \medskip

We work inductively on the $\cA(0,k)$, starting with $k=\ell-i+1$ and working up to $k=\ell-1$. Since $\cA(0,k)$ lies in the $(k+1)$st term $\pi^*D(X)(0,k)$ of the semi-orthogonal decomposition \eqref{sodd} of HPD I, we may left mutate it past the $k$ terms of \eqref{sodd} preceeding it. We already know all of these terms lie in the span of \eqref{list}; this is immediate for the base case $k=\ell-i+1$ and is part of the induction assumption for larger $k$. After the mutation, $\cA(0,k)$ lands up in the last $\ell-k$ terms of \eqref{sodd} tensored with $K_{\cH_L}\cong K_X(1,1-\ell)$. This is
$$
\Langle D(X)(0,k+1-\ell),\,D(X)(0,k+2-\ell),\,\ldots,\,D(X)\Rangle\ =
\!\!\!\!\!\mathop{\Big\langle\cA(\alpha,\beta)
\Big\rangle_{\!\alpha\in[1,i]}}_{\hspace{2cm}\beta\in[k+1-\ell,0]}\vspace{-7mm}
$$
\beq{label}\eeq
since tensoring with $K_X(1,0)$ does not change $\pi^*D(X)$. So it is sufficient to show that the mutation of $\cA(0,k)$, which we call $M_k$, has $\cA(i,\beta)$-component zero for each $\beta\in[k+1-\ell,0]$; this will imply it lies in 
$$
\cA(1)\boxtimes D(\PP(L)),\,\ldots\,,\,\cA(i-1)\boxtimes D(\PP(L)),
$$
and so in \eqref{list}, as required. In fact we will show more; that the component of $M_k$ in 
\beq{komp}
\cA(\alpha,\beta), \quad \beta\in[k+1-\ell,0],\ \alpha\in[i+\beta,i],
\eeq
is zero. (Though we only care about $\alpha=i$ we need vanishing for the other $\alpha$ to make the induction below work.) As illustrated in Figure \ref{fig2} below, we do this by an increasing induction in $\beta$, running from $k+1-\ell$ to $0$, and --- for each fixed $\beta$ --- a decreasing induction on $\alpha$, running from $i$ to $i+\beta$.

\begin{figure}[htbp]
\begin{center}
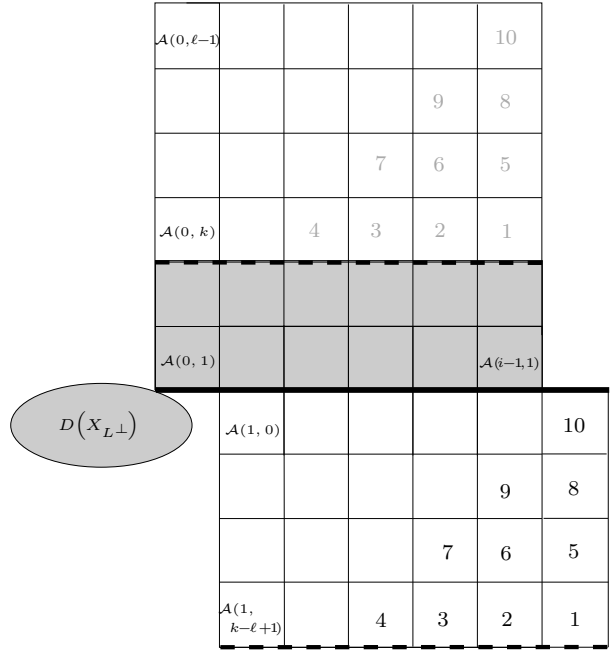
\caption{\small $\cA(0,k)$ is mutated past the shaded boxes into the boxes \eqref{label} below the thick horizontal line. We then show its components in the black numbered boxes vanish in the order indicated, by taking Homs from the corresponding grey numbered boxes.}\label{fig2}
\end{center}
\end{figure}

So we now fix $(\alpha,\beta)$ and prove the vanishing of the $\cA(\alpha,\beta)$-component of $M_k$. We assume by induction the vanishing of $M_k$'s components in \eqref{label} with smaller $\beta$, or the same $\beta$ but bigger $\alpha$. This induction starts with $(\alpha,\beta)=(i,1+k-\ell)$, where the induction assumption is vacuous.

We detect the $\cA(\alpha,\beta)$ component of $M_k$ using Homs from $\cA(\alpha-1,\beta+\ell-1)$. By Lemma \ref{vanish} this has Homs to only
$$
\cA(m,\beta)_{m\in[\alpha,i]}\qquad \mathrm{and}
\mathop{\cA(m,n)_{m\in[\alpha-1,i]}}_{\hspace{22mm}n\in[k-\ell+1,\beta-1]}
$$
of the $\cA(m,n)$s in \eqref{label}. But by the induction step \eqref{komp} the component of $M_k$ is zero in all of these except $\cA(\alpha,\beta)$. So Homs from $\cA(\alpha-1,\beta+\ell-1)$ see only the component in $\cA(\alpha,\beta)$. And they do see this component, by the calculation
$$
R\Hom_{\cH_L}\!\big(\cA(\alpha-1,\beta+\ell-1),\,\cA(\alpha,\beta)\big)\ \cong\  R\Hom_X(\cA,\cA)[2-\ell],
$$
of \eqref{number} (and using $R\Gamma(\cO_{\PP(L)}(-\ell))\cong\C[1-\ell]$). Therefore Homs in $D(\cH_L)$ from $\cA(\alpha-1,\beta+\ell-1)$ detect any $\cA(\alpha,\beta)$ component of $M_k$.

But, by Lemma \ref{vanish} and \eqref{8}, there are no Homs from $\cA(\alpha-1,\beta+\ell-1)$ to either $\cA(0,k)$ or anything it was left mutated past. Thus there are no Homs to $M_k$, which therefore has no component in $\cA(\alpha,\beta)$.
\end{proof}

\begin{proof}[Proof of $\ell<i$ case.] 
This proof is isomorphic to the last one, with $i$ and $\ell$ interchanged and everything transposed. Instead of projecting into
$$\Langle\cA(\alpha,\beta),\ (\alpha,\beta)\in[1,i-1]\times[0,\ell-1]\Rangle^\perp=\,\cC_{\cH_L}$$
we project into 
$$\Langle\cA(\alpha,\beta),\ (\alpha,\beta)\in[0,i-1]\times[1,\ell-1]\Rangle^\perp=\,D\big(X_{L^\perp\!}\big).$$
(Proposition \ref{hypthm} gives the equality.)
We run through the details very briefly.

\medskip
We project $\cC_{\cH_L},\, \cA(1,0),\,\ldots\,,\,\cA(i-\ell,0)$ into $D(X_{L^\perp}\!)$ by the functor
$$
\pi\_{\!L^\perp}=\ \LL_{D(X)(0,1)}\circ\ldots\circ\LL_{D(X)(0,\ell-1)}\colon D(\cH_L)\To D(X_{L^\perp}\!),
$$
cf. Proposition \ref{hypthm}.

Just as in \eqref{ask} this is full and faithful on $\cC_{\cH_L},\, \cA(1,0),\,\ldots\,,\,\cA(i-\ell,0)$, and preserves the semi-orthogonality condition between them, if
\beq{ask2}
R\Hom\!\big(a,\mathrm{Cone}\;(b\to\pi\_{\!L^\perp}b)\big)=0
\eeq
in each of the cases
\begin{enumerate}
\item[(i)] $a\in\cA(k,0),\ 1\le k\le i-\ell,\ \ b\in\cA(1,0)$,
\item[(ii)] $a\in\cA(k,0),\ 1\le k\le i-\ell,\ \ b\in\cC_{\cH_L}$,
\item[(iii)] $a,b\in\cC_{\cH_L}$.
\end{enumerate}

\medskip \noindent\textbf{Case (i).} To begin with we take $b\in\cA(1,0)$ and analyse $\pi\_{\!L^\perp}b\;$. We first left mutate past
$$
D(X)(0,\ell-1)=\Langle\cA(0,\ell-1),\,\ldots\,,\,\cA(i-1,\ell-1)\Rangle
$$
of which only $\cA(0,\ell-1)$ has Homs to $b$ by Lemma \ref{vanish}. Thus
$$
b^{(1)}:=\LL_{D(X)(0,\ell-1)}\,b\,=\,\LL_{\cA(0,\ell-1)}\,b.
$$
For $D(X)(0,\ell-2)$ we use the \emph{shifted} semi-orthogonal decomposition
$$
D(X)(0,\ell-2)=\Langle\cA(-1,\ell-2),\cA(0,\ell-2),\,\ldots\,,\,\cA(i-2,\ell-2)\Rangle.
$$
By Lemma \ref{vanish}, of these subcategories only $\cA(-1,\ell-2),\,\cA(0,\ell-2)$ have Homs to $b^{(1)}\in\Langle\cA(0,\ell-1),\cA(1,0)\Rangle$, so
$$
b^{(2)}:=\LL_{D(X)(0,\ell-2)}\,b^{(1)}\,=\,\LL_{\cA(-1,\ell-2)}\LL_{\cA(0,\ell-2)}\,b^{(1)}.
$$
Proceeding inductively just as before we find that $b^{(\ell-1)}=\pi\_{\!L^\perp}b$ is an iterated extension of $b$ and objects in the categories
\beq{guff2}
\cA(\alpha,\beta),\ \ \alpha\in[2-\ell,0],\ \beta\in[1,\ell+\alpha-1].
\eeq
Thus Cone$(b\to\pi\_{\!L^\perp}b)$ lies in the span of \eqref{guff2}. By Lemma \ref{vanish} this has no Homs from $a\in\cA(k,0),\ 1\le k\le i-\ell$. This proves \eqref{ask2} in case (i).

\medskip \noindent\textbf{Case (ii).} Now take $b$ in the subcategory $\cC_{\cH_L}\subset D(\cH_L)$. By its definition it certainly has no Homs from $\cA(\alpha,\beta)$ for $1\le\alpha\le i-1,\ 1\le\beta\le\ell-1$. But this is the \emph{only} property of $b$ that we used in the $b\in\cA(1,0)$ case above. So the same working shows again that Cone$(b\to\pi\_Lb)$ lies in the span of \eqref{guff2}. By Lemma \ref{vanish} again, this proves \eqref{ask2} in case (ii).

\medskip \noindent\textbf{Case (iii).} It also means that to prove \eqref{ask2} in case (iii) it is sufficient to show that there are no Homs from $\cC_{\cH_L}$ to \eqref{guff2}. Equivalently, by Serre duality we want to show the vanishing of
\beq{trayn}
R\Hom\!\big(\cA(\alpha,\beta)\otimes\omega_{\cH_L}^{-1},\cC_{\cH_L}\big),\quad
\alpha\in[2-\ell,0],\ \beta\in[1,\ell+\alpha-1]
\eeq
Now $\cA\otimes\omega_X^{-1}\cong\cA(i)$ as both are ${}^\perp\Langle\cA(1),\ldots,\cA(i-1)\Rangle$, so
$$
\cA\otimes\omega^{-1}_{\cH_L}=\cA\otimes\big(\omega^{-1}_X\boxtimes\omega^{-1}_{\PP(L)}\big)(-1,-1)=
\cA(i-1,\ell-1).
$$
Therefore \eqref{trayn} is $R\Hom\!\big(\cA(\alpha+i-1,\beta+\ell-1),\cC_{\cH_L}\big)$, which vanishes for $\alpha\in[2-\ell,0]\subset[2-i,0]$ by the definition of $\cC_{\cH_L}$.

\medskip \noindent\textbf{Generation.}
This is very similar to generation in the previous case. Since we have now mutated quite enough and demonstrated to death all of the techniques required, we leave this to the fanatical reader.
\end{proof}

\section{Examples}\label{egs}
To find an HP dual $Y\to\PP(V)$ of some variety $X\to\PP(V^*)$ involves finding a Fourier-Mukai kernel $\mathcal U$ over $Y\times_{\PP(V)}\cH$ whose induced Fourier-Mukai functor $D(Y)\to D(\cH)$  is an equivalence onto $\cC_{\cH}$. The archetypal example is \eqref{twis} below. In general it is very hard to achieve, especially over the discriminant locus of $\cH\to\PP(V)$. By now, however, many examples have been worked out.

\subsection[Example]{Even dimensional quadrics}\label{d=2}
The classical and motivating example of HPD is that of $\P{2n+1}$ with the $\cO(2)$ line bundle (i.e. of the Veronese embedding $\P{2n+1}\to\PP^{(2n+1)(n+2)}$). This has the standard rectangular Lefschetz decomposition $D(\P{2n+1})=\Langle\cA,\cA(2),\ldots,\cA(2n)\Rangle$ with $\cA=\langle\cO,\cO(1)\rangle$.

So the first thing to consider is a smooth even dimensional quadric hypersurface $H\subset\P{2n+1}$. Then the interesting part $\cC_H=\Langle\cO_H,\ldots,\cO_H(2n-1)\Rangle^\perp$  of its derived category is
$$
\cC_H\,=\,\Langle A,B\Rangle,
$$
where $A$ and $B$ are the \emph{spinor bundles} defined using the Clifford algebra of the quadratic form defining $H$. For instance when $n=1$ we have $H\cong\P1\times\P1$ and the spinor bundles are the line bundles $\cO_H(-1,0)$ and $\cO_H(0,-1)$. For $n=2$ the quadric $H\subset\P5$ is Gr$\,(2,4)$ in its Pl\"ucker embedding, and the spinor bundles are the universal subbundle and dual of the universal quotient bundle. In general all we need to know is that they are exceptional and mutually orthogonal, so
$$
\cC_H\,\cong\,D(\mathrm{pt}\sqcup\mathrm{pt})
$$
is geometric --- it is the derived category of 2 disjoint points. Varying $H$ through the linear system of quadrics $(\P{2n+1})^*$, we therefore expect there to be an HP dual given by a double cover
$$
\xymatrix{Y \ar@{->>}[r] & (\P{2n+1})^*.}
$$

In fact, as $H$ becomes singular when the quadratic form defining it drops rank by 1, the spinor bundles become a single spinor sheaf, so this double cover $Y$ branches over the degree $2n+2$ hypersurface of degenerate quadrics in $(\P{2n+1})^*$. For one paragraph, let us work away from the loci where the rank drops further. Then we have a universal spinor sheaf on the universal quadric hypersurface locally analytically (or \'etale locally) over $Y$, with transition functions unique only up to invertible scalars. Therefore choices need not satisfy the cocycle condition on triple overlaps but give a Brauer class
$$
\alpha\in H^2_{\mathrm{\mathaccent19 et}}(\cO^*_Y).
$$
The result is an $\alpha$-\emph{twisted} universal spinor sheaf
\beq{twis}
\mathcal U\in D\big(\cH\times_{(\P{2n+1})^*}\!Y,\,\alpha\big).
\eeq
Away from the codimension-3 locus of corank $\ge2$ quadrics, using \eqref{twis} as a Fourier-Mukai kernel gives a full and faithful embedding $D(Y,\alpha)\into D(\cH)$, making the twisted variety $(Y,\alpha)$ the HP dual of $(\P{2n+1},\cO(2))$. (It can even be extended locally over the locus of corank $2$ quadrics by taking a small resolution of $Y$ along its corank 2 locus \cite{Ad}.)

More globally one cannot quite produce an entirely geometric HP dual. Instead one works with sheaves of Clifford algebras over $(\P{2n+1})^*$. Generically these split as a direct sum of two matrix algebras, so they define a double cover $Y$ over which we get an Azumaya algebra; this is the correct HP dual \cite{Kuq}. Since modules over sheaves of matrix algebras are equivalent to modules over their centre we generically get modules over $\cO_Y$, but in high codimension we have no such geometric description. Instead one should think of the derived category of modules over the Azumaya algebra on the double cover $Y$ to be a ``noncommutative resolution" of the singularities of $Y$ (with its Brauer class $\alpha$).

For generic linear subsystems of dimension $\le2$ (or $\le3$ if we use a small resolution of $Y$) we can avoid the noncommutative locus and consider the HP dual to be a branched double cover. The upshot is that the derived category of an intersection of $r\le4$ quadrics and the derived category of the associated double cover of $\PP^{r-1}$ (with Brauer class and small resolution if $r=4$) are related by full and faithful embedding. Versions of this statement at various levels of generality appear in \cite{ABB,Ad,BO1,BO2,Ka,Kuq}.

For instance, taking $n=2$ and $r=3$ gives Mukai's derived equivalence \cite{Mu} between the K3 intersection of 3 quadrics $$S=Q_0\cap Q_1\cap Q_2$$ in $\P5$ and the (twisted) K3 double cover $M$ of the linear system $\P2=\PP\langle Q_0,Q_1,Q_2\rangle$ branched over a sextic. We emphasise again how simple HPD is to visualise here. Points of $M$ parameterise a choice of quadric through $S$ and spinor sheaf over it. Restricting this sheaf to $S$ makes $M$ into a moduli space of sheaves on $S$, and the (twisted) universal sheaf gives the derived equivalence.

\subsection[Example]{Bilinears}\label{biq}
A similar story has been found recently by Hori, Hosono-Takagi and Rennemo \cite{Ho, HT1, HT2, Re1}, replacing quadratic forms on $V$ by the associated symmetric bilinear forms on $V\times V$. Thus quadric hypersurfaces in $\PP(V)$ are replaced by (1,1) bilinear hypersurfaces in the orbifold
$$
\frac{\PP(V)\times\PP(V)}{\Z/2}\ =\ \Sym^2\PP(V).
$$
(Note this Deligne-Mumford stack is birational to $\Hilb^2\PP(V)$.) For the appropriate Lefschetz collection, the HP dual again admits a description in terms of sheaves of Clifford algebras over the full linear system $\PP(\Sym^2 V^*)$.\medskip

\noindent\textbf{dim$\;(V)$ even.} Here we get a picture somewhat like the even dimensional quadrics case, but with a non-rectangular Lefschetz collection. Over a big open set (and over small linear systems therein) the HP dual is a double cover of $\PP(\Sym^2 V^*)$ branched over the corank 1 locus. In contrast to the quadrics case, however, the Brauer class vanishes. \medskip

\noindent\textbf{dim$\;(V)$ odd.} It turns out now that the interesting part of the derived category of the generic (i.e. smooth) bilinear hypersurface is empty. Over the locus of corank 1 bilinear forms we get two exceptional orthogonal sheaves --- i.e. the derived category of 2 distinct points. These coalesce over the corank 2 locus.

So again over a big open set the HP dual is effectively a double cover --- this time of the corank 1 locus in $\PP(\Sym^2 V^*)$, branched over the corank 2 locus. (Thus --- generically --- we get a \emph{smooth} double cover of a \emph{singular} space, branched over its singular locus. In contrast, in the dim$\;V$ even case and the quadrics case, the double cover was generically over a smooth base with smooth branching locus.) \medskip

It would be interesting to study other invariants of these families of bilinears (intermediate Jacobians, Chow groups, motives, point counts over finite fields, etc) just as people have done for families of quadrics. \medskip

The above quadrics and bilinears are the first in a series examples. Let
$$
X_r(V):=\big\{[T]\in\PP(\Sym^2(V))\colon\rk(T)\le r\big\}\subset\PP(\Sym^2(V))
$$
be the locus of rank $\le r$ symmetric 2-tensors. For $r=1$ this is the Veronese embedding $\PP(V)\into\PP(\Sym^2V)$, of Example \ref{d=2}. For $r=2$ it is\footnote{It is the singular \emph{variety} $\Sym^2$, rather than the \emph{stack} $\Sym^2$ which Rennemo studies. We should think of the latter as a resolution of the former.} $\Sym^2\PP(V)\into\PP(\Sym^2V)$ of Example \ref{biq}. Set $n=\dim V$.

Hori and Knapp \cite{HK} and Kuznetsov \cite{ICM} conjecture that when $n-r$ is odd, the HP dual of $X_r(V)$ is a double cover of $X_{n-r+1}(V^*)$, branched over $X_{n-r}(V^*)$. (Thanks to Nick Addington and J\o rgen Rennemo for discussions about this.) As usual one has to use appropriate noncommutative resolutions (found recently in \cite{SvdB}) and possible Brauer classes on both sides.

\subsection[Example]{Pfaffian-Grassmannian}\label{PG}
Replacing the symmetric tensors in $\Sym^2V$ of the previous two examples by skew tensors in $\Lambda^2V$ gives the Pfaffian-Grassmannian duality of \cite{KuGr}. Let $V$ be a vector space of even dimension $\dim V=2n$. Its Pfaffian variety
$$
\Pf(V)=\big\{[\omega]\in\PP(\Lambda^2V^*)\colon\operatorname{corank}\omega\ge2\big\}\,\subset\,\PP(\Lambda^2V^*)
$$
is the degree $n$ hypersurface $\{\omega^n=0\big\}\subset\PP(\Lambda^2V^*)$ of degenerate 2-forms. It is the classical projective dual of the Grassmannian
$$
\Gr(2,V)=\big\{[P]\in\PP(\Lambda^2V)\colon\rk P=2\big\}\subset\PP(\Lambda^2V).
$$
It is also singular along the locus $\{\operatorname{corank}\omega>2\}$. Kuznetsov \cite{KuGr} finds natural (non-rectangular) Lefschetz collections and conjectures (and has proved in low dimensions) that there exists a noncommutative resolution of $\Pf(V)$ which makes it HP dual to $\Gr(2,V)$.

Thinking of a 2-form $\omega$ as a (skew) map $V\to V^*$ and $P$ as a plane $\subset V$, there is a natural correspondence
$$
\Gamma:=\Big\{(\omega,P)\in\Pf(V)\times\Gr(2,V)\colon\ker\omega\cap P\ne0\Big\}.
$$
In low dimensions at least, the Fourier-Mukai kernel which Kuznetsov conjectures sets up the HP duality is quasi-isomorphic to the ideal sheaf $\I_\Gamma$ of this correspondence.\medskip

\noindent\textbf{Generalised Pfaffians.}
Again the above examples sit inside a bigger series. Fix any vector space $V$ and set $n:=\lfloor\dim V/2\rfloor$. For any integer $0\le i\le n$ Kuznetsov conjectured that the generalised Pfaffian varieties
$$
\Pf_{2i}(V)=\big\{[\omega]\in\PP(\Lambda^2V^*)\colon\operatorname{rank}\omega\le2i\big\}\,\subset\,\PP(\Lambda^2V^*)
$$
and $\Pf_{2n-2i}(V^*)\subset\PP(\Lambda^2V)$ admit noncommutative resolutions which are HP dual. (Setting $\dim V=2n$ and $i=n-1$ gives the previous example.)

Hori \cite{Ho} proposed a physical duality between certain
non-abelian gauged linear sigma models which should realise this conjecture. Very recently Rennemo and Segal \cite{RS} have found a rigorous construction of Hori-dual B-brane categories for these models, thus proving Kuznetsov's conjecture in most cases.

\medskip\noindent\textbf{Borisov-C{\u{a}}ld{\u{a}}raru example.}
The $\dim V=7,\ i=2$ case, is proved in \cite{KuGr}. Applying HPD II to a 14-dimensional $L\subset\Lambda^2V$ whose 7-dimensional orthogonal $L^\perp\subset\Lambda^2V^*$ misses the singularities of $\Pf_4(V)$ gives the Borisov-C{\u{a}}ld{\u{a}}raru-Kuznetsov example of an equivalence of derived categories
$$
D\big(\!\Pf_4(\C^7)\cap\P6\big)\,\cong\,D\big(\!\Gr(2,\C^7)\cap\P{13}\big)
$$
between two non-birational Calabi-Yau 3-folds \cite{BC, KuGr}. This is reproved using sophisticated new techniques (variation of non-abelian GIT quotients, window subcategories and matrix factorisations) in \cite{HT, ADS}.\medskip

\noindent\textbf{Quintic threefold.}
The $\dim V=10,\ i=4$ case is not yet fully proved, but for appropriate $L\cong\C^{40}\subset\Lambda^2V$ it would imply a fully faithful embedding
$$
D\big(\!\Pf(\C^{10})\cap\P4)\big)\Into D\big(\!\Gr(2,10)\cap\P{39}\big).
$$
The left hand side is a quintic 3-fold,\footnote{Moreover Beauville \cite{Beau} has shown that the \emph{generic} quintic 3-fold has such a Pfaffian description.} the standard example of a Calabi-Yau 3-fold. The right hand side is a Fano 11-fold. This embedding is proved in \cite{ST} using methods from matrix factorisations and the variation of GIT quotients.

\medskip\noindent\textbf{Determinantal loci.} We have considered symmetric and skew symmetric matrices; one can also work with more general matrices and their determinantal varieties
$$
D_r(V,W)\,:=\,\big\{\Phi\in\PP(\Hom(V,W))\colon\rk\Phi\le r\big\}.
$$

In \cite{BBF}, it is proved that, after passing to appropriate resolutions or noncommutative resolutions,  
$$
D_r(V,W)\,\subset\,\PP(\Hom(V,W)) \quad\mathrm{and}\quad D_{n-r}(W,V)\,\subset\,\PP(\Hom(V,W)^*)
$$
become HP dual. Here $n=\min(\dim V,\dim W)$.

\medskip There are further beautiful examples of HPD in Kuznetsov's ICM survey \cite{ICM}. We have not touched on the close connection between HPD and categories of matrix factorisations of birational LG models, from which HPD can be deduced \cite{B+,Re2}. Many of the examples described here were first found in that context by Hori and his collaborators, and Hori has conjectured further powerful dualities in \cite{Ho}.

\subsection[Example]{Cubic fourfolds}\label{d=3}
Here we consider the HP dual of $(\P5,\cO(3))$ with its Lefschetz decomposition $D(\P5)=\Langle\cA,\cA(3)\Rangle,\ \cA=\langle\cO,\cO(1),\cO(2)\rangle$. This example nicely ties together parts of Examples \ref{d=2} and \ref{PG}.

The smooth hypersurfaces are now cubic fourfolds $H$, and $\cC_H$ is a ``noncommutative K3 category" --- it has Serre functor $[2]$ and the same Hochschild homology and cohomology as $D(\mathrm{K3})$. Thus the HP dual $\cC_\cH$ of $(\P5,\cO(3))$ is a ``noncommutative K3 fibration" over $\PP(\Sym^3\C^{6*})$.
 
For some \emph{special} cubic fourfolds $H$, the category $\cC_H$ is \emph{geometric}:
$$
\cC_H\,\cong\,D(S)
$$
for some (commutative!) algebraic K3 surface $S$. In \cite{KuC} Kuznetsov describes three families of examples.

\medskip \noindent\textbf{Pfaffian cubics.}
We begin with the Pfaffian-Grassmannian duality of Example \ref{PG} with $\dim V=6$ and $\dim L=9$. This gives
$$
D(S)\Into D(H),
$$
where $S=\Gr(2,6)\cap\P8$ is a K3 surface and $H=\Pf(\C^6)\cap\P5$ is a Pfaffian cubic fourfold. In \cite[Section 3]{KuC} Kuznetsov shows that the image of this embedding is precisely $\cC_H$.\medskip

\noindent\textbf{Nodal cubics.}
An attractive example using singular cubics considers those containing a single ODP $p\in H$. Pick a complimentary $\P4\subset\P5$ and consider it as the space of lines through $p$. Projecting from $p$ gives a birational map $H\dashrightarrow\P4$ since generic lines through $p$ intersect $H$ in only $3-2=1$ further point. The map blows up $p$ but blows down all those points on a line through $p$ which lies entirely inside $H$. The locus of such lines is easily seen to be a $(2,3)$ complete intersection in the $\P4$ --- i.e. a K3 surface $S$. The result is an isomorphism
\beq{Bll}
\Bl_pH\,\cong\,\Bl_S\P4.
\eeq
The obvious pull-up push-down functor
$$
D(S)\To D(H)
$$
can be mutated into an equivalence $D(S)\rt\sim\cC_H$ \cite[Section 5]{KuC}. (Because $H$ is singular the definition of $\cC_H$ has to be modified \cite{KuC}.)

\medskip
\noindent\textbf{Cubics containing a plane.}
The final example is given by cubic fourfolds $H$ containing a plane $P\subset H$. The linear system of
hyperplanes containing P defines a map $\Bl_P(H)\to\P2$ whose fibres are
quadric surfaces $Q$ since the intersection of two hyperplanes in $H$ is the reducible cubic surface $Q\cup P$.

As in Example \ref{d=2}, this quadric fibration over $\P2$ defines a K3 double cover $S\to\P2$, a Brauer class $\alpha$ and an embedding
\beq{Hura}
D(S,\alpha)\Into D(\Bl_P(H)).
\eeq
This can be projected into $D(H)$ and mutated into $\cC_H$ fully faithfully \cite[Section 4]{KuC}. Thus, when $\alpha=0$ (which is when the quadric fibration has a section) the category $\cC_H$ is geometric:
\beq{ratio}
D(S)\,\cong\,\cC_H.
\eeq

\medskip
\noindent\textbf{Rationality.} In the three families above where $\cC_H$ is geometric, the special cubic fourfolds are all \emph{rational}. For instance, in the third example, fibrewise stereographic projection from the section makes the quadric bundle birational to a Zariski-locally trivial $\P2$-bundle over $\P2$, which in turn is birational to $\P2\times\P2$ and so $\P4$.

In the second example rationality is especially explicit \eqref{Bll}, as is its connection to the geometric K3 category. Since, similarly, any birational map from a cubic fourfold to $\P4$ is expected to involve a blow up in a K3 surface, Kuznetsov conjectured that $\cC_H$ should be rational if and only if $\cC_H$ is equivalent to the derived category of some K3 surface $S$. (Note however that as yet, no cubic fourfold is proved to be irrational.)

Kuznetsov's results categorify earlier work explaining the remarkable connections between cubic fourfolds and K3 surfaces. In particular Hassett \cite{Ha1} defines a cubic fourfold $H$ to be \emph{special} if its Hodge structure contains an isometric copy of the primitive Hodge structure of a polarised $K3$ surface. He and Harris considered the question of whether this condition might be equivalent to the rationality of $H$. See \cite{Ha2} for a survey of this question.

\medskip
\noindent\textbf{Hassett vs Kuznetsov.} Hassett \cite{Ha1} shows that cubic fourfolds which are special in his Hodge theoretic sense are precisely those whose $H^{2,2}$ contains a primitive integral class $T$ satisfying a numerical condition (that the discriminant $d$ of the lattice $\langle h^2, T\rangle$ is not divisible by 4, 9 or any odd prime $p$ of the form $3n+2$). Such cubics form an irreducible Noether-Lefschetz divisor $C_d$ inside the 20 dimensional moduli space of cubic fourfolds.

When $\cC_H$ is geometric, equivalent to $D(S)$ for some algebraic K3 surface $S$, the induced map $D(S)\into D(H)$ induces a map $$H^*(S)\To H^*(H)$$ which makes $H$ special \cite{AT}. Thus it lies in one of the divisors $C_d$ above, and cubics which are special in the sense of Kuznetsov are also special in the sense of Hassett.

The converse is also expected to be true, and is proved \emph{generically} in \cite{AT}. More precisely, over a Zariski open (and so dense) subset of any nonempty special Noether-Lefschetz divisor $C_d$, the categories $\cC_H$ are all equivalent to the derived categories of K3 surfaces. This is proved by some deformation theory which is outside the scope of these notes, but the starting point of the deformation theory is pure HPD. Namely, to begin we need a cubic fourfold in $C_d$ which is Kuznetsov special. This is gotten by proving that $C_d$ contains cubics $H$ which contain a plane. Thus \eqref{ratio} gives us an equivalence
\beq{night}
\cC_H\,\cong\,D(S)
\eeq
(the Brauer class vanishes because of the existence of the special class $T$). Unfortunately this expresses $S$ as a moduli space of objects in $\cC_H$ which do \emph{not} deform  as $H$ moves through $C_d$ (as we saw in \eqref{ratio}, they are the pushforward to $H$ of spinor sheaves on the fibres of the quadric fibration $\Bl_PH\to\P2$). But from our special class $T\in H^{2,2}(H)$ (which \emph{does} deform throughout $C_d$) we now get an induced class in $H^*(S)$, and the classic results of Mukai then give a new K3 surface $M$ which is a fine moduli space of sheaves on $S$ in this class. The universal sheaf gives a derived equivalence
$$
D(S)\,\cong\,D(M)
$$
which we compose with \eqref{night} to give
\beq{2equiv}
\cC_H\,\cong\,D(M).
\eeq
It is $M$, and the derived equivalence \eqref{2equiv}, which we then deform over $C_d$. As $H$ deforms, we deform $M$ with it via the abstract Torelli theorem by insisting that its period point matches that of $H$ under the isomorphism of Hodge structures given to us by Hassett. As we do so we try to deform the Fourier-Mukai kernel of the equivalence \eqref{2equiv}. These deformations are governed by Hochschild cohomology, which for K3 categories is isomorphic to the Hodge structure of the K3 surface (with a modified grading). So deformations are governed by Hodge theory, and we chose our deformation of $M$ to make its Hodge structure line up with that of $H$. Thus the Fourier-Mukai kernel indeed deforms, and to all orders \cite{AT}.

Huybrechts \cite{Hu} has recently extended these results to twisted K3 surfaces, i.e. the question of which cubics $H$ satisfy the weaker condition $\cC_H\cong D(S,\alpha)$ for possibly nonzero $\alpha$. Again the starting point is Kuznetsov's HPD result \eqref{Hura}.

\medskip
\noindent\textbf{Pencils of cubics.} From a pencil of cubic fourfolds we can define two different Calabi-Yau 3-folds. HPD then makes them derived equivalent.

The first is the baselocus of the pencil --- i.e. the 3-dimensional Calabi-Yau intersection of two cubics in $\P5$. Secondly, consider the universal family $\cH\to\PP^1$ of cubic fourfolds $H$ in the pencil; passing to their K3 categories $\cC_H\subset D(H)$ gives a ``noncommutative K3 fibration" over $\P1$.

To make this latter object geometric, we can take the pencil to consist entirely of special cubics. Unfortunately this makes both $\cH$ and the baselocus singular, but in \cite{CT1} it is shown how to resolve this problem (crepantly) when the special cubics are either nodal or contain a plane. The result is derived equivalences between complete intersection Calabi-Yau 3-folds and K3-fibred Calabi-Yau 3-folds.

\subsection[Example]{Baseloci and blow ups} The initial data for HPD was a basepoint-free linear system, but an extension to linear systems with baselocus has been described recently \cite{CT2}.

Carocci and Turcinovic start with homologically dual varieties
$$
X\To\PP(V^*) \quad\mathrm{and}\quad Y\To\PP(V).
$$
(In fact, either could be noncommutative. So for instance a variety $X$ with a Lefschetz decomposition is all the data we need; we then take $\cC_{\cH_{\!X}}$ for $Y$.)

Fix a linear subsystem $W\subset V$. It defines a rational map $X\dashrightarrow\PP(W^*)$ which blows up its baselocus $X_{\PP(W^{\perp\!})}\subset X$ (the basechange of $X$ to $\PP(W^\perp)\subset\PP(V^*)$). Thus we get a regular map
$$
\Bl_{X_{\PP(W^{\perp\!})}}(X)\To\PP(W^*)
$$
and --- for the appropriate choice of Lefschetz collection \cite{CT2} --- its HP dual is $Y_{\PP(W)}\to\PP(W)$. This basechange of $Y\to\PP(V)$ to $\PP(W)$ can be geometric even when $Y$ itself is noncommutative; see examples in \cite{CT1,CT2}.

\bibliographystyle{halphanum}
\bibliography{references}

\medskip\noindent {\tt{richard.thomas@imperial.ac.uk}} \medskip

\noindent Department of Mathematics \\
\noindent Imperial College London\\
\noindent London SW7 2AZ \\
\noindent United Kingdom

\end{document}